\title{Lagrangian mapping class groups from a group homological 
point of view}
\author{Takuya Sakasai}
\address{Department of Mathematics, 
Tokyo Institute of Technology,
2-12-1 Oh-okayama, Meguro-ku, 
Tokyo, 152-8551, Japan}
\email{sakasai@math.titech.ac.jp}
\date{\today}
\subjclass[2000]{Primary~55R40, Secondary~32G15; 57R20}
\keywords{Mapping class group; Torelli group; Lagrangian filtration; 
Miller-Morita-Mumford class}
\newtheorem{thm}{Theorem}[section]
\newtheorem{prop}[thm]{Proposition}
\newtheorem{lem}[thm]{Lemma}
\newtheorem{cor}[thm]{Corollary}
\theoremstyle{definition}
\newtheorem{remark}[thm]{Remark}
\numberwithin{equation}{section}
\begin{document}

\newcommand{\Mg}{\mathcal{M}_g}
\newcommand{\Mgb}{\mathcal{M}_{g,1}}
\newcommand{\Ig}{\mathcal{I}_g}
\newcommand{\Igb}{\mathcal{I}_{g,1}}
\newcommand{\Sg}{\Sigma_g}
\newcommand{\Sgb}{\Sigma_{g,1}}
\newcommand{\Symp}[1]{Sp(2g,\mathbb{#1})}
\newcommand{\Lg}{\mathcal{L}_g}
\newcommand{\Lgb}{\mathcal{L}_{g,1}}
\newcommand{\ILg}{\mathcal{IL}_g}
\newcommand{\ILgb}{\mathcal{IL}_{g,1}}

\newcommand{\ur}[1]{urSp(#1)}
\newcommand{\urp}[1]{urSp^+(#1)}
\newcommand{\Hq}{H_{\mathbb{Q}}}

\newcommand{\Ker}{\mathop{\mathrm{Ker}}\nolimits}
\newcommand{\Hom}{\mathop{\mathrm{Hom}}\nolimits}

\newcommand{\Out}{\mathop{\mathrm{Out}}\nolimits}
\newcommand{\Aut}{\mathop{\mathrm{Aut}}\nolimits}
\renewcommand{\Im}{\mathop{\mathrm{Im}}\nolimits}
\newcommand{\Q}{\mathbb{Q}}
\newcommand{\Z}{\mathbb{Z}}

\begin{abstract}
We focus on two kinds of infinite index subgroups of the mapping class
group of a surface associated with a Lagrangian submodule of the first
homology of a surface. These subgroups, called Lagrangian mapping class 
groups, are known to play important roles in the interaction between
the mapping class group and finite-type invariants of 
3-manifolds. 
In this paper, we discuss these groups from a group (co)homological point of 
view. The results include 
the determination of their abelianizations, lower bounds of 
the second homology and remarks on the (co)homology of higher degrees. 
As a by-product of this investigation, we determine the second homology 
of the mapping class group of a surface of genus $3$.
\end{abstract}

\renewcommand\baselinestretch{1.1}
\setlength{\baselineskip}{16pt}

\newcounter{fig}
\setcounter{fig}{0}

\maketitle

\section{Introduction}\label{sec:intro}
Let $\Sg$ be a closed oriented connected surface of genus $g$ and 
let $H_g$ be an oriented handlebody  of the same genus. 
As depicted in Figure \ref{fig:surface}, we put $H_g$ 
in the standard position in $\mathbb{R}^3$ and consider 
$\Sg$ to be the boundary of  $H_g$. 
We fix a basis $\{x_1, x_2,\ldots,x_g, y_1,y_2,\ldots,y_g\}$ 
of $H:=H_1 (\Sg)$ as in the figure 
so that $\Ker (H_1 (\Sg) \to H_1 (H_g))$ coincides with 
the submodule $L$ of $H$ generated by $\{x_1, x_2,\ldots,x_g\}$. 

\begin{figure}[htbp]
\begin{center}
\includegraphics[width=0.55\textwidth]{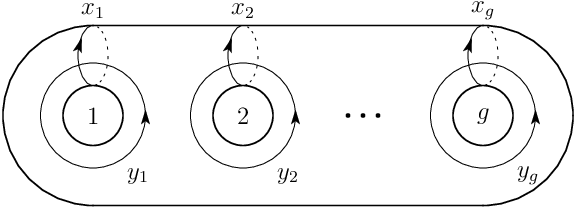}
\end{center}
\caption{A symplectic basis of $H_1 (\Sg)$}
\label{fig:surface}
\end{figure}

The module $H$ has a natural non-degenerate anti-symmetric 
bilinear form $\mu:H \otimes H \to \Z$ called 
the intersection pairing. 
It is easy to see that $L$ is a maximal direct summand of $H$ on 
which $\mu$ 
restricts to $0$. 
Such a submodule is said to be {\it Lagrangian}. 
By using the pairing $\mu$, we can naturally identify 
the quotient module $H/L$, the dual module $L^\ast:=\Hom(L,\Z)$ and 
the submodule $L_y$ of $H$ generated by $\{y_1, y_2,\ldots,y_g\}$. 

The {\it mapping class group} $\Mg$ of $\Sg$ is the group of 
isotopy classes of orientation preserving 
self-diffeomorphisms of $\Sg$. 
In this paper, we focus on subgroups of $\Mg$ 
associated with the above fixed Lagrangian submodule $L$ of $H$. 
More precisely, two subgroups 
\begin{align*}
\Lg \ &:=\{f \in \Mg \mid f_\ast (L)=L\}, \\
\ILg&:=\{f \in \Mg \mid f_\ast |_L = \mathrm{id}_L\}
\end{align*}
are studied through their group (co)homology, where 
$f_\ast$ denotes the induced automorphism of $H$ 
for $f \in \Mg$. 
We have $\ILg \subset \Lg$ by definition and call them 
{\it Lagrangian mapping class groups} or 
{\it Lagrangian subgroups}. 
The {\it Torelli group} $\Ig$ is defined by
\[\Ig := \{f \in \Mg \mid f_\ast = \mathrm{id}_H\}.\]

One motivation by which the author started to study the groups 
$\Lg$ and $\ILg$ is 
the fact that they are {\it infinite\/} index subgroups of $\Mg$ 
{\it including\/} $\Ig$. 
The importance to study such a kind of subgroups 
will be explained in Section \ref{subsec:oddMMM} 
with the relationship to the (non-)triviality problem of 
{\it even Miller-Morita-Mumford classes} $e_{2i} \in H^{4i}(\Mg;\Q)$ 
pulled back to $H^{4i}(\Ig;\Q)$. 

The study of Lagrangian subgroups has been done by several researchers and 
here we recall them briefly. 
Hirose studied a generating system of $\Lg$ in \cite{hirose}, 
where $\Lg$ is called the {\it homological handlebody group}. 
In fact, the group $\Lg$ can be seen as a homological extension of the 
handlebody mapping class group $\mathcal{H}_g$. 
Recall that the group $\mathcal{H}_g$ is the subgroup of $\Mg$ consisting of 
isotopy classes of orientation preserving 
self-diffeomorphisms of $\Sg=\partial H_g$ that can be extended to 
self-diffeomorphisms of the handlebody $H_g$. 
We can easily check that $\Lg = \mathcal{H}_g \Ig$. 
Note that, prior to Hirose's work, Birman gave a generating 
set of $\Lg/\Ig \cong \mathcal{H}_g/(\mathcal{H}_g \cap \Ig)$ in \cite{bi2} and we can give 
a generating set of $\Lg$ by combining her result with Johnson's finite generating 
set of $\Ig$ (see \cite{jo1}). 

As for $\ILg$, Levine gave a series of investigations 
in \cite{le1, le3, le4}. 
He defined a filtration of $\ILg$ called the {\it Lagrangian filtration}, 
which is analogous to the Johnson filtration of $\Ig$, by modifying 
the theory of Johnson homomorphisms 
so that it conforms well to $\ILg$. Then he gave an application of this 
filtration to the theory of homology 3-spheres. 

Recently, the groups $\Lg$ and $\ILg$ appear and play important roles in 
the theory of finite-type invariants of 3-manifolds. See 
Andersen-Bene-Meilhan-Penner \cite{abmp}, 
Cheptea-Habiro-Massuyeau \cite{chm}, 
Cheptea-Le \cite{cl} (with a slightly different definition) and \\
Garoufalidis-Levine \cite{gl2} 
for example. However, it seems that the groups $\Lg$ and $\ILg$ 
have been studied separately. In this paper, we 
put $\Lg$ on the top of the Lagrangian filtration 
of $\ILg$ and study them simultaneously as in the case of $\Mg$ and $\Ig$. 

Here we mention the contents of this paper. 
We first summarize the notation and fundamental facts on $\Lg$ and 
$\ILg$ in Section \ref{sec:LagMCG}. Then we will discuss 
the following in order. 
\begin{itemize}
\item Section \ref{sec:H1ILgb}: Computation of $H_1 (\ILg)$ 

\item Section \ref{sec:H_ur}: 
Computations of $H_1 (\Lg/\Ig)$ and $H_2(\Lg/\Ig)$ 

\item Section \ref{sec:H1Lgb}: 
Computation of $H_1(\Lg)$ and a lower bound of $H_2(\Lg)$ 

\item Section \ref{sec:remark}: Remarks on higher (co)homology 
of $\Lg$ and $\ILg$
\end{itemize}
Precisely speaking, we study in Sections \ref{sec:H1ILgb} and 
\ref{sec:H1Lgb} the Lagrangian mapping class groups of a surface 
with one boundary component 
and then derive the statements for those of a closed surface 
in Section \ref{sec:closed}. 

As a by-product, we will give a remark that the second homology of 
the full mapping class group of genus $3$ has $\Z_2$ as a direct summand 
(Theorem \ref{thm:H2M3b} and Corollary \ref{cor:H2M3}), 
where this homology group has been 
almost determined by Korkmaz-Stipsicz \cite{ks} 
up to this $\Z_2$ summand. 

In this paper, 
we use the same notation $H_\ast ( \cdot )$ for 
the homology of both topological spaces and groups unless 
otherwise stated. 
We refer to Brown's book \cite{br} 
for generalities of group (co)homology. 

\section{Lagrangian mapping class groups}\label{sec:LagMCG}

By using the ordered basis 
$\{x_1, x_2, \ldots, x_g, y_1, y_2, \ldots, y_g \}$ of $H$, 
we fix an isomorphism between $\Z^{2g}$ and $H$, 
which enables us to identify the symplectic group $\Symp{Z}$ with 
the group of automorphisms of $H$ 
preserving the intersection pairing $\mu$. Then 
the action of $\Mg$ on $H$ gives the exact sequence 
\begin{equation}\label{seq:mg}
1 \longrightarrow \Ig \longrightarrow \Mg 
\stackrel{\sigma}{\longrightarrow} \Symp{Z} \longrightarrow 1
\end{equation}
\noindent
with $\Ker \sigma = \Ig$, the Torelli group. 
The symplecticity condition for a $(2g) \times (2g)$ 
matrix $X={\small \begin{pmatrix}
A & B \\ C & D \end{pmatrix}}$ with $g \times g$ matrices $A,B,C,D$ 
is given by 
\[{}^t X \begin{pmatrix}
O & I_g \\ -I_g & O \end{pmatrix} X = \begin{pmatrix}
O & I_g \\ -I_g & O \end{pmatrix},\]
where we denote by $I_g$ the identity matrix of size $g$. 
The left hand side is equal to 
\[\begin{pmatrix}
-{}^t C A + {}^t A C & -{}^t C B + {}^t A D \\
-{}^t D A + {}^t B C & -{}^t D B + {}^t B D
\end{pmatrix}.\]
From this we see that if $C=O$, then $D = {}^t A^{-1}$ holds and 
$A^{-1}B$ is symmetric. 
This case corresponds to $\sigma(\Lg)$. 
That is, if we put 
\[urSp(2g):=\left\{{\small \begin{pmatrix}
A & B \\ O & {}^t A^{-1} \end{pmatrix}} \ \bigg| \ 
\mbox{$A^{-1}B$: symmetric} \right\}, \]
then it is a subgroup of $\Symp{Z}$ and the equality 
$\Lg=\sigma^{-1}(urSp(2g))$ follows by definition. The notation 
$urSp(2g)$ meaning ``upper right'' was introduced by Hirose in \cite{hirose}. 
We have the exact sequence
\begin{equation}\label{seq:ur}
1 \longrightarrow \Ig \longrightarrow \Lg 
\xrightarrow{\sigma|_{\Lg}}
urSp(2g) \longrightarrow 1.
\end{equation}
Moreover, if $C=O$ and $A=D=I_g$, then the matrix $B$ itself 
is symmetric. 
In this case, the subgroup 
\[\left\{{\small \begin{pmatrix}
I_g & B \\ O & I_g \end{pmatrix}} 
\ \bigg| \ \mbox{$B$: symmetric}\right\}\]
is naturally isomorphic to 
the second symmetric power $S^2 L$ of $L$ because
\[\Hom (L_y, L) \cong \Hom(L^\ast , L) \cong L \otimes L\]
and $B$ is symmetric. By definition, the equality 
$\ILg=\sigma^{-1}(S^2 L)$ holds and 
we have the exact sequence
\begin{equation}\label{seq:S2L}
1 \longrightarrow \Ig \longrightarrow \ILg 
\xrightarrow{\sigma|_{\ILg}}
S^2 L \longrightarrow 1.
\end{equation}
Note that $S^2 L$ is a free abelian group. The groups 
$S^2 L$ and $urSp(2g)$ are related by the exact sequence
\begin{equation}\label{seq:S2ur}
1 \longrightarrow S^2 L \longrightarrow urSp(2g) 
\stackrel{ul}{\longrightarrow} GL(g,\Z) \longrightarrow 1,
\end{equation}
where the map $ul$ assigns to each matrix 
its upper left block of size $g \times g$. Note that 
this group extension has a splitting defined by 
\[GL(g,\Z) \longrightarrow urSp(2g) \qquad 
\left(A \longmapsto 
{\small \begin{pmatrix} A & O \\ O & {}^tA^{-1} \end{pmatrix}}\right).\] 
Using (\ref{seq:S2ur}), we obtain the exact sequence 
\begin{equation}\label{seq:GL}
1 \longrightarrow \ILg \longrightarrow \Lg 
\xrightarrow{ul \, \circ \, \sigma |_{\Lg}}
GL(g,\Z) \longrightarrow 1.
\end{equation}

In the subsequent sections, we will use the above exact sequences 
to discuss the homology of $\Lg$ and $\ILg$. By 
a technical reason, however, we first consider 
the mapping class group $\Mgb$ 
of the surface $\Sgb$ obtained from $\Sg$ by removing 
an open disk, where each mapping class is supposed to fix 
the boundary of $\Sgb$ pointwise. The subgroups 
$\Lgb$, $\ILgb$ and $\Igb$ are defined similarly. 
Exact sequences similar to the above 
hold for these groups. 
We naturally identify $H$ with $H_1(\Sigma_{g,1})$. 
Also, we assume that $g \ge 3$ to avoid 
the complexity of $\mathcal{I}_{2,1}$, which is not covered 
by Johnson's work (see the next section).

\section{The first homology of $\ILgb$}\label{sec:H1ILgb}

We begin our investigation by determining the first homology, 
namely the abelianization, of $\ILgb$. For that,  we use 
the five-term exact sequence 
\begin{equation}\label{seq:ILgb}
H_2 (\ILgb) \to H_2 (S^2 L) \to H_1 (\Igb)_{S^2 L} 
\to H_1 (\ILgb) \to H_1(S^2 L) \to 0
\end{equation}
\noindent
associated with the group extension (\ref{seq:S2L}). 
Put
\[X_i^2:= x_i \otimes x_i, \qquad 
X_{ij}=X_{ji}:= x_i \otimes x_j + x_j \otimes x_i.\]
The set 
\[\{X_i^2 \mid 1 \le i \le g\} \cup 
\{X_{ij} \mid 1 \le i < j \le g\}\]
forms a basis of $S^2 L$ in $L \otimes L$. 
As a subgroup of $\Symp{Z}$, the group $S^2 L$ acts on $H$ by 
\begin{equation}\label{eq:action}
X_i^2 : \left\{ 
\begin{array}{l}
x_k \mapsto x_k \\
y_k \mapsto \delta_{ik} x_i +y_k
\end{array}
\right. , \qquad 
X_{ij} : \left\{ 
\begin{array}{l}
x_k \mapsto x_k \\
y_k \mapsto \delta_{jk} x_i + \delta_{ik} x_j +y_k
\end{array} \right. ,
\end{equation}
where $\delta_{ij}$ is the Kronecker delta. 
\begin{lem}\label{lem:H2ILgb}
The homomorphism 
$(\sigma|_{\ILgb})_\ast : H_2 (\ILgb) \to H_2 (S^2 L) \cong 
\wedge^2 (S^2 L)$ is surjective.
\end{lem}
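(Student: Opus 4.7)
The plan is to exhibit enough explicit 2-cycles in $H_2(\ILgb)$ coming from pairs of commuting Dehn twists along disjoint simple closed curves whose homology classes lie in $L$.

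The starting observation is that for any simple closed curve $\gamma \subset \Sgb$ with $[\gamma]=a\in L$, the Dehn twist $T_\gamma$ acts on $H$ by $v\mapsto v+\mu(v,a)\,a$, which is the identity on $L$ because $\mu$ vanishes on $L\otimes L$. Hence $T_\gamma\in \ILgb$. A straightforward matrix calculation using the identification of Section~\ref{sec:LagMCG} shows that $\sigma(T_\gamma)\in S^2L$ equals, up to sign, the element $a^{(2)}:=a\otimes a\in S^2L$; explicitly, if $a=\sum_i a_i x_i$ then $a^{(2)}=\sum_i a_i^2 X_i^2+\sum_{i<j} a_i a_j X_{ij}$.

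For any two disjoint simple closed curves $\gamma_1,\gamma_2\subset \Sgb$ with classes $a,b\in L$, the Dehn twists $T_{\gamma_1}$ and $T_{\gamma_2}$ commute, so I obtain a homomorphism $\Z^2\hookrightarrow \ILgb$. By functoriality of $H_2$ and the natural identification $H_2(A)\cong\wedge^2 A$ for free abelian $A$, the generator of $H_2(\Z^2)\cong\Z$ maps to a class in $H_2(\ILgb)$ whose image under $(\sigma|_{\ILgb})_\ast$ is
\[
\sigma(T_{\gamma_1})\wedge\sigma(T_{\gamma_2}) \;=\; \pm\, a^{(2)}\wedge b^{(2)} \;\in\; \wedge^2 S^2L.
\]
Thus it suffices to realise a set of wedges $a^{(2)}\wedge b^{(2)}$ which generate $\wedge^2 S^2 L$ over $\Z$.

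The polarisation identity $(x_i+x_j)^{(2)}=X_i^2+X_j^2+X_{ij}$ shows that the squares $\{a^{(2)}:a\in L\}$ already generate $S^2L$, so by bilinearity $\wedge^2 S^2 L$ is generated by wedges of squares, and one may restrict to the finite set $\{x_i^{(2)},(x_i+x_j)^{(2)}\}_{i<j}$. The main obstacle is therefore geometric: for each pair drawn from this collection, one must exhibit disjoint simple closed curve representatives in $\Sgb$. Since $\mu$ vanishes on $L$ there is no algebraic intersection obstruction, and the hypothesis $g\ge 3$ provides enough room: standard meridians on distinct handles, together with curves obtained by band-summing through two handles (arranged to miss the relevant meridians), realise every required pair disjointly. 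Feeding these pairs into the construction above produces 2-cycles mapping onto a generating set of $\wedge^2 S^2 L$, which yields the surjectivity.
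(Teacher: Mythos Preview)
Your argument is correct and follows the same abelian--cycle strategy as the paper: produce commuting pairs of Dehn twists in $\ILgb$ and push the fundamental class of $\Z^2$ forward. The computation $\sigma(T_\gamma)=\pm\,a^{(2)}$ and the polarisation step are fine.

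The difference is in the packaging of the geometric step. You realise a generating set of $\wedge^2 S^2L$ one wedge at a time, which forces you into a case analysis (meridian vs.\ meridian, meridian vs.\ band-sum, band-sum vs.\ band-sum with overlapping handles, etc.); your last paragraph waves at this but does not actually carry it out, and for $g=3$ cases like $(x_1+x_2)^{(2)}\wedge(x_1+x_3)^{(2)}$ deserve an explicit picture. The paper sidesteps this entirely: it exhibits \emph{two disjoint curve systems} on $\Sgb$, giving abelian subgroups $G_1,G_2\subset\ILgb$ with $[G_1,G_2]=1$ and $\sigma(G_i)=S^2L$ for each $i$. Then for arbitrary $a,b\in S^2L$ one simply picks $f_1\in G_1$, $f_2\in G_2$ with $\sigma(f_i)=a,b$, and the single commuting pair $(f_1,f_2)$ hits $a\wedge b$. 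This buys a uniform construction with no case analysis, at the cost of drawing one slightly more elaborate figure. Your route is more elementary in spirit but leaves more to check.
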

\begin{proof}
We use the technique of {\it abelian cycles} to construct homology classes 
in $\Im (\sigma|_{\ILgb})_\ast$. That is, for each homomorphism 
$\varphi : \Z^2 \to \ILgb$, we have a homology class 
$\varphi_\ast (1) \in H_2 (\ILgb)$ by sending the fundamental class 
$1 \in H_2 (\Z^2) \cong \Z$ to $H_2 (\ILgb)$. 
Such a class $\varphi_\ast (1)$, which is in fact defined on cycle level, 
is called an {\it abelian cycle associated with} $\varphi$. 
Moreover, we can see that 
\[(\sigma|_{\ILgb} \circ \varphi)_\ast (1) = 
(\sigma|_{\ILgb} \circ \varphi) ((1,0)) \wedge 
(\sigma|_{\ILgb} \circ \varphi) ((0,1)) 
\in \wedge^2 (S^2 L) \cong H_2 (S^2 L),\]
where $(1,0),(0,1) \in \Z^2$  (see 
\cite[Lemma 2.2]{sa} for details). 

\begin{figure}[htbp]
\begin{center}
\includegraphics[width=0.35\textwidth]{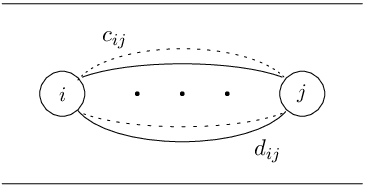}
\hskip 35pt
\includegraphics[width=0.35\textwidth]{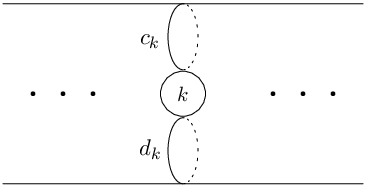}
\end{center}
\caption{}
\label{fig:s2l}
\end{figure}

Define simple closed curves $c_{ij}, d_{ij}, c_k, d_k$ $(1 \le i<j \le g, 1 \le k \le g)$ on $\Sgb$ 
as in Figure \ref{fig:s2l}.  
Let $G_1$ (resp.~$G_2$) be 
the subgroup of $\ILg$ generated by 
$\{T_{c_{ij}}\}_{i,j} \cup \{T_{c_k}\}_k$ 
(resp.~$\{T_{d_{ij}}\}_{i,j} \cup \{T_{d_k}\}_k$), where 
$T_c$ denotes the right-handed Dehn twist along a simple closed curve $c$.  
Since 
\[\sigma|_{\ILgb}(T_{c_{ij}})=\sigma|_{\ILgb}(T_{d_{ij}})=
X_i^2 -X_{ij} +X_j^2, \qquad 
\sigma|_{\ILgb}(T_{c_{k}})=\sigma|_{\ILgb}(T_{d_{k}}) 
=X_k^2,\]
each of $\sigma|_{\ILgb} (G_1)$ and $\sigma|_{\ILgb} (G_2)$ generates 
$S^2 L$. Clearly $fg=gf \in \ILg$ holds for 
any $f \in G_1$ and $g \in G_2$. Hence, for each element 
of the form $a \wedge b$ in $\wedge^2 (S^2 L)$, we can take 
$f_1 \in G_1$ and $f_2 \in G_2$ satisfying
\[ (\sigma|_{\ILgb})(f_1)=a, \quad (\sigma|_{\ILgb})(f_2)=b, \quad  
f_1 f_2 = f_2 f_1.\] 
They give a homomorphism 
$\varphi: \Z^2 \to \ILgb$ with 
$(\sigma|_{\ILgb} \circ \varphi)_\ast (1) = a \wedge b$, 
which implies the surjectivity of $(\sigma|_{\ILgb})_\ast$.
\end{proof}
\noindent
Lemma \ref{lem:H2ILgb} shows that $H_2 (\ILgb)$ 
is non-trivial (see also Theorem \ref{thm:pure}). 
In particular, its rank, which may be infinite, 
gets bigger and bigger when $g$ grows. 

Before going further, here we recall some results on 
the Torelli group $\Igb$ obtained by Johnson 
in \cite{jo}--\cite{jo3}. 
First, he showed in \cite{jo1} that $\Igb$ is finitely generated 
for $g \ge 3$. 
This fact together with the sequences (\ref{seq:ur}), (\ref{seq:S2L}) 
imply that $\Lgb$ and $\ILgb$ are also finitely generated. 
At present, it is not known whether they are finitely presentable or not, 
where the same question for $\Igb$ is a well-known open problem. 
Second, he showed that $\Igb$ is normally 
generated by only one element $T_{c_2} T_{d_2}^{-1}$ 
(see Figure \ref{fig:s2l}). Finally, in \cite{jo3}, 
he determined the abelianization of $\Igb$ written as follows. 
Let $B$ be a commutative $\Z_2$-algebra with unit $1$ 
generated by formal elements $\overline{x}$ for 
$x \in H \otimes \Z_2$ and having relations 
\[\overline{x}^2 = \overline{x}, \qquad 
\overline{x+y} = \overline{x} + \overline{y} + \overline{\mu}(x,y)\]
for $x,y \in H \otimes \Z_2$, where 
$\overline{\mu}(x,y):= \mu (x,y) \bmod 2$. 
The algebra $B$ can be graded by supposing that each 
$\overline{x}$ has degree $1$ (after replacing $\overline{x}^2$ 
by $\overline{x}$). 
Let $B^i$ be the submodule of $B$ generated by elements of 
degree at most $i$. 
This endows $B$ with a filtration
\[B^3 \supset B^2 \supset B^1 \supset B^0=\{0,1\}.\] 
We have a natural action of $\Mgb$ on $B^3$ defined by 
$f \, \overline{x} := \overline{f_\ast (x)}$. It is easily checked that 
there exists a natural $\Mgb$-equivariant isomorphism
\[B^3/B^2 \cong \wedge^3 (H \otimes \Z_2).\]
Therefore we can take the fiber product 
$\wedge^3 H \times_{\wedge^3 (H \otimes \Z_2)} B^3$ 
of the natural projections 
$B^3 \to B^3/B^2 \cong \wedge^3 (H \otimes \Z_2)$ and 
$\wedge^3 H \to \wedge^3 (H \otimes \Z_2)$. 
Then Johnson gave an $\Mgb$-equivariant isomorphism 
\[(\tau,\beta): H_1 (\Igb) \stackrel{\cong}{\longrightarrow} 
\wedge^3 H \times_{\wedge^3 (H \otimes \Z_2)} B^3,\]
where $\Mgb$ acts on $\Igb$ and $H_1 (\Igb)$ by conjugation and 
on $\wedge^3 H \times_{\wedge^3 (H \otimes \Z_2)} B^3$ diagonally. 
The homomorphism $\tau$ is now called the {\it Johnson homomorphism} 
\cite{jo, jo2} and 
$\beta$ is called the {\it Birman-Craggs-Johnson homomorphism} 
(see \cite{joq} and Birman-Craggs \cite{bc}). 
Explicitly, the isomorphism is given by 
\[T_{c_2} T_{d_2}^{-1} \longmapsto (x_1 \wedge y_1 \wedge y_2, 
\overline{x}_1 \overline{y}_1 (\overline{y}_2+1)),\]
which characterizes an $\Mgb$-equivariant homomorphism uniquely 
because $\Igb$ is normally generated by $T_{c_2} T_{d_2}^{-1}$.

\begin{lem}\label{lem:Ig_Coinv}
$H_1 (\Igb)_{S^2 L} \cong 
\left\{\begin{array}{ll}
\wedge^3 L^\ast \oplus L^\ast \oplus 
\wedge^2 (L^\ast \otimes \Z_2) & 
(g = 3), \\
\wedge^3 L^\ast \oplus L^\ast & 
(g \ge 4).
\end{array}\right.$
\end{lem}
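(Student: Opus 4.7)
The plan is to use Johnson's $\Mgb$-equivariant isomorphism $(\tau,\beta)\colon H_1(\Igb)\cong \wedge^3 H\times_{\wedge^3(H\otimes\Z_2)}B^3$ to extract the short exact sequence
\[
0 \longrightarrow B^2 \longrightarrow H_1(\Igb) \longrightarrow \wedge^3 H \longrightarrow 0
\]
of $\Mgb$-modules (with $B^2=\ker(B^3\to B^3/B^2)$, and surjectivity onto $\wedge^3 H$ coming from surjectivity of the mod-$2$ reduction). Restricting to the action of $S^2 L\subset \Symp{Z}$ and taking coinvariants gives the right-exact sequence
\[
(B^2)_{S^2 L} \longrightarrow H_1(\Igb)_{S^2 L} \longrightarrow (\wedge^3 H)_{S^2 L} \longrightarrow 0,
\]
reducing the proof to two coinvariant computations together with a control of the leftmost map.

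For $(\wedge^3 H)_{S^2 L}$, an element of $S^2 L$ with symmetric matrix $B=(B_{ij})$ fixes each $x_i$ and sends $y_i\mapsto y_i+\sum_j B_{ij}x_j$, which extends to a derivation on $\wedge^\ast H$. Imposing $X_a^2\cdot v-v\equiv 0$ and $X_{ab}\cdot v-v\equiv 0$ on basis wedges shows that every basis element of $\wedge^3 H$ containing an $x_a$ unpaired with $y_a$ is killed, and the three-term relation from $X_{ab}\cdot(y_a\wedge y_b\wedge y_c)$ forces the class $x_a\wedge y_a\wedge y_c$ to depend only on $c$ (independent of $a\neq c$). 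This identifies $(\wedge^3 H)_{S^2 L}\cong \wedge^3 L^\ast\oplus L^\ast$, with the two summands spanned by $\{y_i\wedge y_j\wedge y_k\}_{i<j<k}$ and by one class $[x_a\wedge y_a\wedge y_c]$ per index $c$.

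For $(B^2)_{S^2 L}$, the subtlety is the affine nature of the $S^2 L$-action on $B$: the relation $\overline{x+y}=\overline x+\overline y+\overline\mu(x,y)$ gives $X_a^2\cdot\overline y_a=\overline y_a+\overline x_a+1$ (since $\overline\mu(y_a,x_a)=1$) and $X_{ab}\cdot\overline y_a=\overline y_a+\overline x_b$ for $a\neq b$. These force $\overline x_a+1\equiv 0$ and $\overline x_b\equiv 0$ in the coinvariants, hence $1\equiv 0$, killing all of $B^0\oplus L\otimes \Z_2$. Propagating the same analysis to degree-two monomials, and using $g\geq 3$ to produce a third index $a\notin\{b,c\}$ in the relation producing $\overline x_b\overline y_c\equiv 0$, one further shows that each $\overline y_i$ is killed while a single class $\omega=[\overline x_a\overline y_a]$ and the $\overline y_i\overline y_j$ survive. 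Thus $(B^2)_{S^2 L}\cong \Z_2\oplus \wedge^2(L^\ast\otimes\Z_2)$.

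The final and most delicate step is to compute the image of $(B^2)_{S^2 L}$ in $H_1(\Igb)_{S^2 L}$, or equivalently the kernel of the connecting map $H_1(S^2 L,\wedge^3 H)\to (B^2)_{S^2 L}$. I would attack this by producing explicit abelian $2$-cycles on $\ILgb$ from commuting pairs of Dehn twists, as in the proof of Lemma \ref{lem:H2ILgb}, and computing both their $\tau$- and $\beta$-values. The fibre-product coupling through the mod-$2$ reduction $\wedge^3 H\to B^3/B^2$, together with the explicit formula $\beta(T_{c_2}T_{d_2}^{-1})=\overline x_1\overline y_1(\overline y_2+1)$, should let one show that the class $\omega$ is always hit and that the $\wedge^2(L^\ast\otimes\Z_2)$ summand is hit for $g\geq 4$ (where disjoint genus-one subsurfaces realising enough independent $S^2 L$-classes are available) but not for $g=3$. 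This last step—bookkeeping the BCJ contribution of commuting-pair cycles across the fibre product—is the main obstacle, and the dimension-counting obstruction on $\Sigma_{3,1}$ is exactly what creates the extra $\wedge^2(L^\ast\otimes\Z_2)$ summand in the $g=3$ case.
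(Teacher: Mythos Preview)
Your filtration strategy---using the short exact sequence $0\to B^2\to H_1(\Igb)\to\wedge^3 H\to 0$ and taking $S^2 L$-coinvariants---is a legitimate alternative organisation, and your computations of $(\wedge^3 H)_{S^2 L}\cong\wedge^3 L^\ast\oplus L^\ast$ and $(B^2)_{S^2 L}\cong\Z_2\oplus\wedge^2(L^\ast\otimes\Z_2)$ are correct in outline. But the step you yourself flag as ``the main obstacle'' is genuinely missing, and the tool you propose does not address it. What must be determined is the \emph{image} (equivalently the cokernel, not the kernel) of the connecting map $\delta\colon H_1(S^2 L;\wedge^3 H)\to(B^2)_{S^2 L}$, and this is a purely module-theoretic boundary: one lifts $v\in\wedge^3 H$ to $\tilde v=(v,\overline v)$ in the fibre product and reads off the $B^2$-component of $(\sigma-1)\tilde v$. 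Abelian $2$-cycles built from commuting Dehn twists on $\ILgb$ live in $H_2(\ILgb)$ and are the input to Lemma~\ref{lem:H2ILgb}; they carry no information about $\delta$, and indeed Lemma~\ref{lem:H2ILgb} shows that the corresponding differential $H_2(S^2 L)\to H_1(\Igb)_{S^2 L}$ vanishes.

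Computing $\delta$ correctly means tracking the affine $S^2 L$-action on degree-$3$ monomials of $B^3$ through the fibre product, which is exactly the paper's tabulation $(1a)$--$(9b)$. For instance, $\omega=[\overline x_i\overline y_i]$ dies because combining $(3a)$ with $(3c)$ yields $(0,\overline x_i\overline y_i)\in Q_0$, while $(0,\overline y_j\overline y_k)$ is eliminated only through $(5b)$, which needs a fourth index $l$ and hence $g\ge4$; for $g=3$ one must still verify that no combination of the remaining relations produces it. Thus your filtration postpones rather than bypasses the explicit case-by-case check: you would still need essentially the full list both to exhibit the relations killing $\omega$ and $\overline y_j\overline y_k$ (upper bound) and to certify exhaustiveness for the surviving $g=3$ classes (lower bound). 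The ``dimension-counting obstruction'' you invoke is only a heuristic for why $g=3$ is special; it is not a proof that $(0,\overline y_j\overline y_k)$ survives.
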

\begin{proof}
By definition, the coinvariant part $H_1 (\Igb)_{S^2 L}$ is the quotient 
of $H_1 (\Igb)$ 
by the submodule $Q_0$ generated by $\{ \sigma x-x \mid 
\sigma \in S^2 L, x \in H_1 (\Igb)\}$. We now list 
a generating set of $Q_0$ explicitly. Assuming that 
the indices $i,j,k,l \in \{1,2,\ldots,g\}$ are distinct from each other, we 
have 
\begin{align*}
&
X_j^2(x_i \wedge x_j \wedge y_j, \overline{x}_i \overline{x}_j\overline{y}_j)
-(x_i \wedge x_j \wedge y_j, \overline{x}_i \overline{x}_j\overline{y}_j) \\
=\ & (x_i \wedge x_j \wedge (x_j+y_j), \overline{x}_i \overline{x}_j 
\overline{x_j+y_j}) 
-(x_i \wedge x_j \wedge y_j, \overline{x}_i \overline{x}_j\overline{y}_j) \\
=\ & (x_i \wedge x_j \wedge y_j, \overline{x}_i \overline{x}_j 
(\overline{x}_j+\overline{y}_j+1)) 
-(x_i \wedge x_j \wedge y_j, \overline{x}_i \overline{x}_j\overline{y}_j) \\
=\ & (0, \overline{x}_i \overline{x}_j^2+ 
\overline{x}_i \overline{x}_j) = 
(0,0), 
\end{align*}
\noindent
where we used the relations 
$\overline{x}_j^2=\overline{x}_j$ 
and $2 \overline{x}_i \overline{x}_j=0$ in $B^3$. 
We denote this result by
\begin{align*}
&(1a) \qquad 
[X_j^2; (x_i \wedge x_j \wedge y_j, \overline{x}_i \overline{x}_j\overline{y}_j)]
:= (0,0), \\
\intertext{for short. Similar calculations show that}
&(1b) \qquad 
[X_{kj};(x_i \wedge x_j \wedge y_j, 
\overline{x}_i \overline{x}_j\overline{y}_j)] = 
(x_i \wedge x_j \wedge x_k, 
\overline{x}_i \overline{x}_j \overline{x}_k), \\
&(1c) \qquad 
[X_{ij};(x_i \wedge x_j \wedge y_j, 
\overline{x}_i \overline{x}_j\overline{y}_j)] = 
(0, \overline{x}_i \overline{x}_j), \\
&(2a) \qquad 
[X_k^2; (x_i \wedge x_j \wedge y_k, \overline{x}_i \overline{x}_j\overline{y}_k)]
= (x_i \wedge x_j \wedge x_k, \overline{x}_i \overline{x}_j \overline{x}_k+
\overline{x}_i \overline{x}_j), \\
&(2b) \qquad 
[X_{jk}; (x_i \wedge x_j \wedge y_k, 
\overline{x}_i \overline{x}_j\overline{y}_k)]
= (0, \overline{x}_i \overline{x}_j), \\
&(2c)^\ast \qquad 
[X_{lk}; (x_i \wedge x_j \wedge y_k, 
\overline{x}_i \overline{x}_j\overline{y}_k)]
= (x_i \wedge x_j \wedge x_l, \overline{x}_i \overline{x}_j \overline{x}_l), \\
&(3a) \qquad 
[X_j^2; (x_i \wedge y_i \wedge y_j, 
\overline{x}_i \overline{y}_i \overline{y}_j)]
= (-x_i \wedge x_j \wedge y_i, 
\overline{x}_i \overline{x}_j \overline{y}_i+\overline{x}_i \overline{y}_i), \\
&(3b) \qquad 
[X_{ik}; (x_i \wedge y_i \wedge y_j, 
\overline{x}_i \overline{y}_i \overline{y}_j)]
= (x_i \wedge x_k \wedge y_j, \overline{x}_i \overline{x}_k \overline{y}_j), \\
&(3c) \qquad 
[X_{jk}; (x_i \wedge y_i \wedge y_j, 
\overline{x}_i \overline{y}_i \overline{y}_j)]
= (-x_i \wedge x_k \wedge y_i, 
\overline{x}_i \overline{x}_k \overline{y}_i), \\
&(3d) \qquad 
[X_{ij}; (x_i \wedge y_i \wedge y_j, 
\overline{x}_i \overline{y}_i \overline{y}_j)]
= (x_i \wedge x_j \wedge y_j, \overline{x}_i \overline{x}_j \overline{y}_j+
\overline{x}_i \overline{x}_j+\overline{x}_i \overline{y}_i), \\
&(4a) \qquad 
[X_j^2; (x_i \wedge y_j \wedge y_k, 
\overline{x}_i \overline{y}_j \overline{y}_k)]
= (x_i \wedge x_j \wedge y_k, 
\overline{x}_i \overline{x}_j \overline{y}_k+\overline{x}_i \overline{y}_k), \\
&(4b) \qquad 
[X_{ij}; (x_i \wedge y_j \wedge y_k, 
\overline{x}_i \overline{y}_j \overline{y}_k)]
= (0, \overline{x}_i \overline{y}_k), \\
&(4c)^\ast \qquad 
[X_{jl}; (x_i \wedge y_j \wedge y_k, 
\overline{x}_i \overline{y}_j \overline{y}_k)]
= (x_i \wedge x_l \wedge y_k, 
\overline{x}_i \overline{x}_l \overline{y}_k), \\
&(4d) \qquad 
[X_{jk}; (x_i \wedge y_j \wedge y_k, 
\overline{x}_i \overline{y}_j \overline{y}_k)]
= (x_i \wedge x_k \wedge x_j+x_i \wedge x_k \wedge y_k 
-x_i \wedge x_j \wedge y_j, \\
&\hskip 205pt \overline{x}_i \overline{x}_k \overline{x}_j
+\overline{x}_i \overline{x}_k \overline{y}_k
+\overline{x}_i \overline{x}_j \overline{y}_j), \\
&(5a) \qquad 
[X_i^2; (y_i \wedge y_j \wedge y_k, 
\overline{y}_i \overline{y}_j \overline{y}_k)]
= (x_i \wedge y_j \wedge y_k, 
\overline{x}_i \overline{y}_j \overline{y}_k+\overline{y}_j \overline{y}_k), \\
&(5b)^\ast \qquad 
[X_{il}; (y_i \wedge y_j \wedge y_k, 
\overline{y}_i \overline{y}_j \overline{y}_k)]
= (x_l \wedge y_j \wedge y_k, 
\overline{x}_l \overline{y}_j \overline{y}_k), \\
&(5c) \qquad 
[X_{ij}; (y_i \wedge y_j \wedge y_k, 
\overline{y}_i \overline{y}_j \overline{y}_k)]
= (x_j \wedge x_i \wedge y_k+x_j \wedge y_j \wedge y_k 
-x_i \wedge y_i \wedge y_k, \\
&\hskip 205pt \overline{x}_j \overline{x}_i \overline{y}_k
+\overline{x}_j \overline{y}_j \overline{y}_k
+\overline{x}_i \overline{y}_i \overline{y}_k), \\
&(6) \ \ \qquad 
[X_{ij}; (0, \overline{x}_i \overline{y}_i)]
= (0, \overline{x}_i \overline{x}_j), \\
&(7a) \qquad 
[X_j^2; (0, \overline{x}_i \overline{y}_j)]
= (0, \overline{x}_i \overline{x}_j +\overline{x}_i), \\
&(7b) \qquad 
[X_{ij}; (0, \overline{x}_i \overline{y}_j)]
= (0, \overline{x}_i), \\
&(7c) \qquad 
[X_{jk}; (0, \overline{x}_i \overline{y}_j)]
= (0, \overline{x}_i \overline{x}_k), \\
&(8a) \qquad 
[X_i^2; (0, \overline{y}_i \overline{y}_j)]
= (0, \overline{x}_i \overline{y}_j +\overline{y}_j), \\
&(8b) \qquad 
[X_{ik}; (0, \overline{y}_i \overline{y}_j)]
= (0, \overline{x}_k \overline{y}_j), \\
&(8c) \qquad 
[X_{ij}; (0, \overline{y}_i \overline{y}_j)]
= (0, \overline{x}_i \overline{x}_j +\overline{x}_i\overline{y}_i 
+\overline{x}_j\overline{y}_j), \\
&(9a) \qquad 
[X_i^2; (0, \overline{y}_i )]
= (0, \overline{x}_i+1), \\
&(9b) \qquad 
[X_{ij}; (0, \overline{y}_i )]
= (0, \overline{x}_j), 
\end{align*}

\noindent
where $( \ \cdot \ )^\ast$ means that it is valid for $g \ge 4$. 
The actions not listed above are all trivial, 
namely $\sigma x - x=(0,0)$, so that they do not contribute to $Q_0$. 
In particular, there are no contribution from the elements
\[(x_i \wedge x_j \wedge x_k, \overline{x}_i \overline{x}_j\overline{x}_k), 
\quad (0 , \overline{x}_i \overline{x}_j), \quad 
(0, \overline{x}_i), \quad (0, 1).\]

From $(7b, 9a, 1c, 1b, 4b, 8a, 3b, 3c, 3a, 5b, 5a)$, 
we see that, for $g \ge 4$, $Q_0$ contains 
$(0,\overline{x}_j)$, $(0,1)$, $(0, \overline{x}_i \overline{x}_j)$, 
$(x_i \wedge x_j \wedge x_k, 
\overline{x}_i \overline{x}_j \overline{x}_k)$, 
$(0, \overline{x}_i \overline{y}_k)$, 
$(0, \overline{y}_j)$, $(x_i \wedge x_k \wedge y_j, 
\overline{x}_i \overline{x}_k \overline{y}_j)$, 
$(x_i \wedge x_k \wedge y_i, 
\overline{x}_i \overline{x}_k \overline{y}_i)$, 
$(0, \overline{x}_i \overline{y}_i)$, 
$(x_l \wedge y_j \wedge y_k, 
\overline{x}_l \overline{y}_j \overline{y}_k)$, 
$(0, \overline{y}_j \overline{y}_k)$ in order, 
and that combinations of these elements 
express all the generators listed above except $(5c)$. Finally 
$(5c)$ shows that $(x_j \wedge y_j \wedge y_k 
-x_i \wedge y_i \wedge y_k, \overline{x}_j \overline{y}_j \overline{y}_k
+\overline{x}_i \overline{y}_i \overline{y}_k)$ are in $Q_0$. 
Our claim for $g \ge 4$ follows from this, where 
we assign $y_k \in L^\ast$ to $(y_k \wedge x_i \wedge y_i, 
\overline{y}_k \overline{x}_i \overline{y}_i) \in H_1 (\Igb)_{S^2 L}$, 
which does not depend on $i$. 

When $g=3$, differently from the above, we cannot remove 
$(x_l \wedge y_j \wedge y_k, 
\overline{x}_l \overline{y}_j \overline{y}_k)$ and 
$(0, \overline{y}_j \overline{y}_k)$ simultaneously. 
In this case, we use $(5a)$ to 
eliminate $(x_l \wedge y_j \wedge y_k, 
\overline{x}_l \overline{y}_j \overline{y}_k)$ and 
conclude that 
$(0, \overline{y}_j \overline{y}_k)$ survive in 
$H_1 (\Igb)_{S^2 L}$ and 
form $\wedge^2 (L^\ast \otimes \Z_2)$. 
\end{proof}
By the exact sequence (\ref{seq:ILgb}) together with Lemmas \ref{lem:H2ILgb}, 
\ref{lem:Ig_Coinv}, we conclude the following. 
\begin{thm}\label{thm:H1ILgb}
$H_1 (\ILgb) \cong \left\{\begin{array}{ll}
\wedge^3 L^\ast \oplus L^\ast \oplus 
\wedge^2 (L^\ast \otimes \Z_2) \oplus S^2 L & 
(g = 3), \\
\wedge^3 L^\ast \oplus L^\ast \oplus S^2 L & 
(g \ge 4).\end{array}\right.$
\end{thm}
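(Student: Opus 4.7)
The plan is to feed Lemmas \ref{lem:H2ILgb} and \ref{lem:Ig_Coinv} into the five-term exact sequence (\ref{seq:ILgb}) associated with the group extension (\ref{seq:S2L}). The key ingredient is the surjectivity of $(\sigma|_{\ILgb})_\ast$ supplied by Lemma \ref{lem:H2ILgb}: by exactness of (\ref{seq:ILgb}), this surjectivity forces the next arrow $H_2(S^2L) \to H_1(\Igb)_{S^2L}$ to be the zero map, so (\ref{seq:ILgb}) truncates to the short exact sequence
\[0 \longrightarrow H_1(\Igb)_{S^2L} \longrightarrow H_1(\ILgb) \longrightarrow H_1(S^2L) \longrightarrow 0.\]

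Next I would observe that $H_1(S^2L) = S^2L$ is free abelian -- a point already highlighted just after (\ref{seq:S2L}) -- so this short exact sequence splits. Hence
\[H_1(\ILgb) \cong H_1(\Igb)_{S^2L} \oplus S^2L.\]
Substituting the description of $H_1(\Igb)_{S^2L}$ computed in Lemma \ref{lem:Ig_Coinv} then yields the formula claimed in the theorem, with the case distinction at $g=3$ inherited verbatim from that lemma through the surviving $\wedge^2(L^\ast \otimes \Z_2)$ summand.

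The genuine work of the argument has already been absorbed into the two preceding lemmas -- the abelian-cycle construction from pairs of commuting Dehn twists for Lemma \ref{lem:H2ILgb}, and the long tabulation of the $S^2L$-action on Johnson's generators for Lemma \ref{lem:Ig_Coinv}. Given these, there is no substantive obstacle left: the only possibly non-trivial differential in the five-term sequence is killed by Lemma \ref{lem:H2ILgb}, and the freeness of $S^2L$ makes the splitting automatic. The proof is therefore a short assembly step.
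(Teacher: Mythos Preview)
Your proposal is correct and matches the paper's approach exactly: the paper simply states that Theorem \ref{thm:H1ILgb} follows from the five-term sequence (\ref{seq:ILgb}) together with Lemmas \ref{lem:H2ILgb} and \ref{lem:Ig_Coinv}, and you have spelled out precisely the implicit reasoning (surjectivity kills the connecting map, freeness of $S^2 L$ splits the resulting short exact sequence).
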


\begin{remark}
In \cite[Theorem 1]{le1}, 
Levine constructed a surjective homomorphism 
\[\mathcal{J}: 
H_1 (\ILgb) \twoheadrightarrow \wedge^3 L^\ast \oplus L^\ast\] 
by using the Johnson homomorphism $\tau$ for $\Igb$. We can check that 
$\mathcal{J}$ coincides with the projection to the first two components of 
the isomorphism in Theorem \ref{thm:H1ILgb}. 
In \cite[Section 5.1]{bfp}, 
Broaddus-Farb-Putman gave another construction of $\mathcal{J}$. 
In fact, their homomorphisms called {\it relative Johnson homomorphisms} 
cover not only $\ILgb$ but any subgroup of $\Mgb$ fixing a given 
submodule of $H$. 
\end{remark}

\section{The first and second homology of $\ur{2g}$}\label{sec:H_ur}

In this section, we determine the first and second homology 
of $\ur{2g}$ for later use. By a technical reason, we first consider 
its index $2$ subgroup $\urp{2g}$ defined by 
\begin{equation}\label{seq:urSp+}
1 \longrightarrow \urp{2g} \longrightarrow \ur{2g} 
\xrightarrow{\det {\tiny \circ} ul} \Z_2 \longrightarrow 1. 
\end{equation}
\noindent
By restricting the sequence (\ref{seq:S2ur}) to $\urp{2g}$, we have 
a split exact sequence 
\begin{equation}\label{seq:S2ur+}
1 \longrightarrow S^2 L \longrightarrow \urp{2g} 
\stackrel{ul}{\longrightarrow} SL(g,\Z) \longrightarrow 1.
\end{equation}
\begin{prop}\label{lem:urSp+}
\begin{tabular}[t]{ll}
$(1)$ & The group $\urp{2g}$ is perfect, that is $H_1(\urp{2g}) = 0$ 
for $g \ge 3$. \\ 
&\\
$(2)$ & $H_2 (\urp{2g}) \cong \left\{\begin{array}{ll}
\Z_2 \oplus \Z_2 \oplus \Z_2 \oplus \Z_2 & (g = 3), \\
\Z_2 \oplus \Z_2 & (g = 4), \\
\Z_2 & (g \ge 5).
\end{array}\right.$
\end{tabular}
\end{prop}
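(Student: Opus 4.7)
The plan is to apply the Lyndon--Hochschild--Serre spectral sequence to the split short exact sequence (\ref{seq:S2ur+}). Because $S^2 L$ is free abelian, $H_q(S^2 L; \Z) \cong \wedge^q S^2 L$, so we obtain
\[E^2_{p,q} = H_p(SL(g,\Z); \wedge^q S^2 L) \Longrightarrow H_{p+q}(\urp{2g}; \Z).\]
The splitting makes $H_\ast(SL(g,\Z)) \to H_\ast(\urp{2g})$ split injective, so every differential leaving the bottom row $q=0$ vanishes and $E^\infty_{p,0} = H_p(SL(g,\Z))$.

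For part (1) I would use the associated five-term exact sequence
\[H_2(SL(g,\Z)) \longrightarrow (S^2 L)_{SL(g,\Z)} \longrightarrow H_1(\urp{2g}) \longrightarrow H_1(SL(g,\Z)) \longrightarrow 0.\]
It is classical (Mennicke, Bass--Milnor--Serre) that $H_1(SL(g,\Z)) = 0$ for $g \ge 3$, so it suffices to show $(S^2 L)_{SL(g,\Z)} = 0$. Conjugation in $\urp{2g}$ reveals that $SL(g,\Z)$ acts on $S^2 L$ by the symmetric-square action $B \mapsto A B \, {}^t\!A$. Using the elementary matrix $E_{12}(1) \in SL(g,\Z)$, the identity $E_{12}(1) \cdot X_{23} - X_{23} = X_{13}$ (which requires a third index, available for $g \ge 3$) kills every off-diagonal generator $X_{ij}$ in the coinvariants. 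A parallel calculation $E_{12}(1) \cdot X_2^2 - X_2^2 = X_1^2 + X_{12}$ then forces $X_1^2 \equiv 0$, and symmetry disposes of the remaining $X_i^2$. Hence $(S^2 L)_{SL(g,\Z)} = 0$ and $H_1(\urp{2g}) = 0$.

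For part (2) I would read off the three $E^2$-terms of total degree $2$:
\[E^2_{2,0} = H_2(SL(g,\Z); \Z), \qquad E^2_{1,1} = H_1(SL(g,\Z); S^2 L), \qquad E^2_{0,2} = \bigl(\wedge^2 S^2 L\bigr)_{SL(g,\Z)}.\]
By Milnor's computation $E^2_{2,0} \cong K_2(\Z) \cong \Z_2$ for $g \ge 3$, and the splitting promotes this to a canonical $\Z_2$ direct summand of $H_2(\urp{2g})$, yielding the stable answer. I would compute $E^2_{0,2}$ by direct coinvariant analysis modelled on (1), and $E^2_{1,1}$ using known $SL(g,\Z)$-homology with symmetric-square coefficients together with homological stability. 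Both groups are expected to vanish for $g \ge 5$ and to acquire a small number of exceptional $\Z_2$-summands in the unstable range. Finally, the $d_2$-differential $E^2_{2,1} \to E^2_{0,2}$ and the resulting filtration extension must be resolved to assemble $H_2(\urp{2g})$.

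The hardest part will be the unstable computations at $g = 3$ and $g = 4$: identifying precisely which $\Z_2$-generators of $E^2_{0,2}$ and $E^2_{1,1}$ survive the coinvariant and homological relations, and controlling the $d_2$-differentials tightly enough to recover the stated extra $\Z_2$ summands. The exceptional $\wedge^2 (L^\ast \otimes \Z_2)$ that appeared in Theorem \ref{thm:H1ILgb} at $g = 3$ provides the heuristic that such small-genus extra behaviour is to be expected.
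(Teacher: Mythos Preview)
Your overall framework --- the Lyndon--Hochschild--Serre spectral sequence for the split extension (\ref{seq:S2ur+}), with the splitting killing differentials out of the row $q=0$ --- is exactly the paper's approach, and your argument for (1) is essentially Lemma~\ref{lem:SLS2L}(1).

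There is, however, a concrete error in your accounting for (2). You assert $E^2_{2,0} = H_2(SL(g,\Z)) \cong K_2(\Z) \cong \Z_2$ for all $g \ge 3$, but this is false in the unstable range: van der Kallen \cite{vd} showed $H_2(SL(3,\Z)) \cong H_2(SL(4,\Z)) \cong \Z_2 \oplus \Z_2$, with only one summand surviving stably. This changes the bookkeeping completely. For $g=4$ the paper proves (Lemmas~\ref{lem:SLS2L}(2) and~\ref{lem:SLWS2L}) that $E^2_{1,1} = E^2_{0,2} = 0$, so the entire $\Z_2 \oplus \Z_2$ comes from $E^2_{2,0}$ alone; your expectation of ``exceptional $\Z_2$-summands'' in $E^2_{1,1}$ or $E^2_{0,2}$ at $g=4$ would have you searching for contributions that are not there, and with your value of $E^2_{2,0}$ you could never reach $\Z_2 \oplus \Z_2$. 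For $g=3$ the correct picture is $E^2_{2,0} \cong \Z_2^2$, $E^2_{1,1} \cong \Z_2$ (Lemma~\ref{lem:genus3_1}, computer-assisted in the paper), and $E^2_{0,2} \cong \Z_2$ (Lemma~\ref{lem:genus3_2}), accounting for the four summands.

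There is also a missing ingredient at $g=3$. To show that the generator of $E^2_{0,2} \cong (\wedge^2 S^2 L)_{SL(3,\Z)}$ is not hit by $d_2\colon E^2_{2,1} \to E^2_{0,2}$, and then to resolve the filtration extension $0 \to E^\infty_{0,2} \to F_0 \to E^\infty_{1,1} \to 0$ (ruling out $F_0 \cong \Z_4$), the paper uses an external comparison: the natural map to $H_2(Sp(6,\Z)) \cong \Z \oplus \Z_2$ (Stein \cite{stein}) detects this generator in the $\Z_2$-summand. Without such an external input neither the differential nor the extension can be settled by internal spectral-sequence reasoning, so you should plan on invoking something of this kind.
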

We will prove this proposition by using the Lyndon-Hochschild-Serre 
spectral sequence 
\begin{equation}\label{LHSseq1}
E_{p,q}^2 =H_p(SL(g,\Z);H_q(S^2 L)) \Longrightarrow 
H_n (\urp{2g})
\end{equation}
\noindent
associated with (\ref{seq:S2ur+}). 
Before that, 
we recall the first and second homology of $SL(g,\Z)$. 
We refer to books of Milnor \cite[Sections 5 and 10]{milnorK} 
and Rosenberg \cite[Sections 4.1 and 4.2]{rosenberg} 
for the facts below and generalities of the second homology of groups. 
The group $SL(g,\Z)$ has a presentation 
\begin{center}
\begin{tabular}{ll}
\textbullet \ generators: & $\{e_{ij} \mid 
\mbox{$1 \le i \le g, 1 \le j \le g $ and $i \neq j$}\}$,\\ 
\textbullet \ relations: &
$\begin{array}[t]{ll}
{[}e_{ij},e_{kl}]=1 & \mbox{if $j \neq k$ and $i \neq l$},\\
{[}e_{ik},e_{kj}]=e_{ij} & \mbox{if $i \neq j \neq k \neq i$},\\
(e_{12}e_{21}^{-1}e_{12})^4=1, &
\end{array}$
\end{tabular}
\end{center}
where $e_{ij}$ corresponds to 
the matrix whose diagonal entries and $(i,j)$-entry are $1$ with 
the others $0$. 
From this presentation, we immediately see that $SL(g,\Z)$ 
is perfect 
for every $g \ge 3$. The second homology, which is also called 
the {\it Schur multiplier}, of 
$SL(g,\Z)$ is also known:
\[H_2 (SL(g,\Z)) \cong \left\{\begin{array}{ll}
\Z_2 \oplus \Z_2 & (g = 3,4, \ \mbox{by van der Kallen \cite{vd}}), \\
\Z_2 & (g \ge 5),
\end{array}\right.\] 
where van der Kallen also showed in \cite{vd} that 
one summand of $H_2 (SL(3,\Z)) \cong \Z_2 \oplus \Z_2$ survives in 
the stable homology 
$\displaystyle\lim_{g \to \infty} H_2(SL(g,\Z)) 
\cong K_2(\Z) \cong \Z_2$ under stabilization, 
while the other one vanishes in $H_2 (SL(4,\Z))$. 

For computations of the zeroth and first homology of a group $G$, 
we can use any connected CW-complex $X$ with $\pi_1 X=G$. 
Let $X_g$ be a connected CW-complex associated with the above 
presentation of $SL(g,\Z)$. Namely $X_g$ consists 
of one vertex, edges $\{\langle e_{ij} \rangle \mid 
\mbox{$1 \le i \le g, 1 \le j \le g $ and $i \neq j$}\}$ and 
faces 
\[\{\langle [e_{ij},e_{kl}]\rangle \mid 
\mbox{$j \neq k$ and $i \neq l$}\} \cup 
\{\langle[e_{ik},e_{kj}]e_{ij}^{-1}\rangle \mid 
\mbox{if $i \neq j \neq k \neq i$}\} \cup
\{\langle(e_{12}e_{21}^{-1}e_{12})^4\rangle\}\] 
attached to the 1-skeleton of $X_g$ along the words. 
We consider $S^2 L$ to be a local coefficient system on $X_g$. 
The boundary maps 
\begin{align*}
&\partial_1 : C_1 (X_g;S^2 L) \to C_0 (X_g;S^2 L) \cong S^2 L, \\
&\partial_2 : C_2 (X_g;S^2 L) \to C_1 (X_g;S^2 L)
\end{align*}
\noindent
of the complex $C_\ast (X_g;S^2 L) = C_\ast (X_g) \otimes S^2 L$ 
are given by 
\begin{align*}
\partial_1(\langle e_{ij} \rangle \otimes c)&= (e_{ij}^{-1}-1)c,\\
\partial_2(\langle e_1 e_2 \cdots e_n \rangle \otimes c)&= 
\langle e_1 \rangle \otimes c + \langle e_2 \rangle \otimes 
e_1^{-1} c + \langle e_3 \rangle \otimes (e_1e_2)^{-1}c+\cdots \\
& \quad + \langle e_n \rangle \otimes (e_1 e_2 \cdots e_{n-1})^{-1} c
\end{align*}
\noindent
for $c \in S^2 L$, where $e_1, e_2, \ldots, e_n \in 
\{e_{ij}\}_{i,j} \cup \{e_{ij}^{-1}\}_{i,j}$ 
and $\langle e_{ij}^{-1} \rangle \otimes c := 
- \langle e_{ij} \rangle \otimes e_{ij} c$. The action of 
$SL(g,\Z)$ on $L$ is given by 
\[e_{ij}: x_k \mapsto \delta_{jk} x_i +x_k, 
\qquad 
e_{ij}^{-1}: x_k \mapsto -\delta_{jk} x_i +x_k.\]
\begin{lem}\label{lem:SLS2L}
\begin{tabular}[t]{ll}
$(1)$ & $H_0 (SL(g,\Z);S^2 L) \cong (S^2 L)_{SL(g,\Z)} 
= 0$ \ for $g \ge 3$. \\
$(2)$ & $H_1 (SL(g,\Z);S^2 L) = 0$ \ for $g \ge 4$. 
\end{tabular}
\end{lem}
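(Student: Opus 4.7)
For part (1), the plan is a direct chain-level computation using the action of the elementary matrices $e_{ij}$ on the standard basis of $S^2L$. One finds that $e_{ij}$ fixes every basis vector not involving the index $j$, while
\[
(e_{ij}-1)X_{kj} = X_{ki} \ (k \ne i,j), \quad (e_{ij}-1)X_j^2 = X_{ij}+X_i^2, \quad (e_{ij}-1)X_{ij} = 2X_i^2.
\]
For $g\ge 3$ a third index $k$ distinct from $i,j$ always exists, so the first identity forces every $X_{ki}$ to vanish in the coinvariants. The second identity then kills every $X_i^2$ (since $X_{ij}$ has already been killed), and so every basis element of $S^2L$ dies. This gives $(S^2L)_{SL(g,\Z)}=0$.

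For part (2), my plan is to work directly with the chain complex $C_\ast(X_g;S^2L)$ set up above. A $1$-cycle has the form $z=\sum_{i\ne j}\langle e_{ij}\rangle\otimes c_{ij}$ subject to $\sum(e_{ij}^{-1}-1)c_{ij}=0$, and the goal is to realize any such $z$ as $\partial_2$ of an explicit $2$-chain built from the three families of $2$-cells attached to $X_g$. The two main identities I would use at the chain level are
\[
\partial_2(\langle [e_{ij},e_{kl}]\rangle\otimes c) = \langle e_{ij}\rangle\otimes(1-e_{kl}^{-1})c + \langle e_{kl}\rangle\otimes(e_{ij}^{-1}-1)c,
\]
valid whenever $j\ne k$ and $i\ne l$, together with the analogous formula coming from the Steinberg relator $\langle[e_{ik},e_{kj}]e_{ij}^{-1}\rangle$, which lets one trade a contribution on $\langle e_{ij}\rangle$ for contributions on $\langle e_{ik}\rangle$ and $\langle e_{kj}\rangle$.

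The strategy is first to use the Steinberg $2$-cells to redistribute the coefficients $c_{ij}$ so that each one is supported on basis elements of $S^2L$ not involving the indices $i,j$, and then to invoke the commutator $2$-cells to realize the resulting terms as boundaries, using the cycle condition and the coinvariant computation of (1) to handle the residual contributions. The hard part is the bookkeeping --- the systematic reduction of an arbitrary $1$-cycle to a canonical form and the proof that each summand of that form is a boundary. The hypothesis $g\ge 4$ enters precisely here: several rewriting steps require a fourth index distinct from $i,j,k$ so that the relevant pair of elementary matrices commutes on the nose, which is unavailable for $g=3$ and accounts for the exclusion of that case.
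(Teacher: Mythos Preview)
Your argument for (1) is correct and is essentially the paper's: the paper computes $\partial_1(\langle e_{ij}\rangle\otimes X_j^2)$ and $\partial_1(\langle e_{ij}\rangle\otimes X_{jk})$ (using $e_{ij}^{-1}$ rather than $e_{ij}$, an immaterial difference for coinvariants) and draws the same conclusion.

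For (2), what you have written is an outline rather than a proof, and the ``bookkeeping'' you defer is in fact the entire content. Two specific concerns. First, your Step~1 --- redistribute via Steinberg cells until every coefficient $c_{ij}$ avoids the indices $i$ and $j$ --- is not obviously achievable: applying $\langle[e_{ik},e_{kj}]e_{ij}^{-1}\rangle\otimes c$ removes the $\langle e_{ij}\rangle\otimes c$ term but creates coefficients on $\langle e_{ik}\rangle$ and $\langle e_{kj}\rangle$ that typically still involve $i,j,k$, and you give no complexity measure that decreases. Second, you invoke the cycle condition only for ``residual contributions'', but it must do the real work: the paper's computation shows that the generators $\langle e_{ij}\rangle\otimes X_j^2$ and $\langle e_{ij}\rangle\otimes X_{jk}$ (which do involve $j$) are precisely the ones that \emph{survive} in $C_1/\Im\partial_2$, so they cannot be traded away by boundaries alone --- only the cycle condition can eliminate them.

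The paper bypasses these difficulties by reformulating the target: instead of reducing an arbitrary cycle, it shows directly that $\partial_1:C_1/\Im\partial_2\to S^2L$ is an \emph{isomorphism}. The generators of $C_1$ are sorted into seven types I--VII according to how the $S^2L$ factor meets $\{i,j\}$; a short list of explicit $2$-boundaries kills types I, III, IV, VI, VII outright and yields three relations among the remaining types II and V. The resulting quotient then visibly maps isomorphically onto $S^2L$ under $\partial_1$, whence $H_1=\Ker\partial_1/\Im\partial_2=0$. This never touches an arbitrary cycle or the cycle condition, which is why the bookkeeping stays finite. Your identification of where $g\ge4$ enters --- the need for a fourth index in certain commutator and Steinberg cells --- matches the paper exactly.
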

\begin{proof}
Here and hereafter, we suppose that the indices $i,j,k,l$ are distinct from 
each other. We have 
\begin{align*}
&\partial_1 (\langle e_{ij} \rangle \otimes X_j^2) =
(e_{ij}^{-1}-1)X_j^2=(-x_i+x_j)^{\otimes 2}-X_j^2=X_i^2-X_{ij},\\
&\partial_1 (\langle e_{ij} \rangle \otimes X_{jk}) =
(e_{ij}^{-1}-1)X_{jk}=(-X_{ik}+X_{jk})-X_{jk}=-X_{ik}. 
\end{align*}
\noindent
By running $i,j,k$ in $\{1,2,\ldots,g\}$ with $g \ge 3$, we 
immediately see that $\partial_1$ is surjective and $(1)$ holds. 
To show $(2)$, it suffices to check that 
$\partial_1: C_1(SL(g,\Z);S^2 L)/\Im \partial_2 \to S^2 L$ is 
an isomorphism. 
Assume that $g \ge 4$. $C_1 (SL(g,\Z);S^2 L)$ is generated 
by elements of types 
\[\begin{array}{llll}
\mathrm{I}: \ \langle e_{ij} \rangle \otimes X_i^2, & 
\mathrm{II}: \ \langle e_{ij} \rangle \otimes X_j^2, & 
\mathrm{III}: \ \langle e_{ij} \rangle \otimes X_k^2, & \\
\mathrm{IV}: \ \langle e_{ij} \rangle \otimes X_{ij}, & 
\mathrm{V}: \ \langle e_{ij} \rangle \otimes X_{jk}, &
\mathrm{VI}: \ \langle e_{ij} \rangle \otimes X_{il}, &
\mathrm{VII}: \langle e_{ij} \rangle \otimes X_{kl}.
\end{array}\]
For $c \in S^2 L$, we have  
\[\partial_2(\langle [e_{ik},e_{kj}] e_{ij}^{-1} \rangle \otimes c)
=\langle e_{ik} \rangle \otimes (1-e_{kj}^{-1}e_{ij}^{-1})c
+\langle e_{kj} \rangle \otimes (e_{ik}^{-1}-e_{ij}^{-1})c
-\langle e_{ij} \rangle \otimes c.\]
By putting $c=X_i^2, X_j^2, X_k^2, X_l^2, X_{jk}$ and $X_{jl}$, we see that 
\[\begin{array}{cl}
\mathrm{(i)} & -\langle e_{ij} \rangle \otimes X_i^2 \quad (\mbox{type I}),\\
\mathrm{(ii)} &\langle e_{ik} \rangle \otimes 
(X_{ij}+X_{jk}-X_{ik}-X_i^2-X_k^2)
+\langle e_{kj} \rangle \otimes (-X_i^2+X_{ij}) 
-\langle e_{ij} \rangle \otimes X_j^2, \\
\mathrm{(iii)} &\langle e_{kj} \rangle \otimes (X_i^2-X_{ik}+X_k^2)
-\langle e_{ij} \rangle \otimes X_k^2, \\
\mathrm{(iv)} &-\langle e_{ij} \rangle \otimes X_l^2 
\quad (\mbox{type III}),\\
\mathrm{(v)} & \langle e_{ik} \rangle \otimes (X_{ik}+2X_k^2)
+\langle e_{kj} \rangle \otimes (X_{ik}-X_{ij}) 
-\langle e_{ij} \rangle \otimes X_{jk},\\
\mathrm{(vi)} & \langle e_{ik} \rangle \otimes (X_{il}+X_{kl})
+\langle e_{kj} \rangle \otimes X_{il}
-\langle e_{ij} \rangle \otimes X_{jl}
\end{array}\]
are in $\Im \partial_2$. From (i), (iii) and (iv), 
$-\langle e_{kj} \rangle \otimes X_{ik}$ (type VI) is in $\Im \partial_2$. 
Also 
\begin{align*}
\mathrm{(vii)} \quad 
\partial_2(\langle [e_{ij},e_{kl}] \rangle \otimes X_l^2)
&=\langle e_{ij} \rangle \otimes (1-e_{kl}^{-1})X_l^2
+\langle e_{kl} \rangle \otimes (e_{ij}^{-1}-1)X_l^2 \\
&=\langle e_{ij} \rangle \otimes (X_{kl}-X_k^2), \\
\mathrm{(viii)} \quad 
\partial_2(\langle [e_{ij},e_{kj}] \rangle \otimes X_j^2)
&=\langle e_{ij} \rangle \otimes (-X_k^2+X_{kj})
+\langle e_{kj} \rangle \otimes (X_i^2-X_{ij})
\end{align*}
\noindent
are in $\Im \partial_2$. From (iv) and (vii), 
$\langle e_{ij} \rangle \otimes X_{kl}$ (type VII) is in $\Im \partial_2$. 
Then we can derive from (vi) that 
\begin{align*}
\mathrm{(ix)} &\quad \langle e_{ik} \rangle \otimes X_{kl} 
-\langle e_{ij} \rangle \otimes X_{jl} \in \Im \partial_2.\\
\intertext{We see from (iv) and (viii) that}
\mathrm{(x)} &\quad \langle e_{ij} \rangle \otimes X_{jk}
-\langle e_{kj} \rangle \otimes X_{ji} \in \Im \partial_2.\\
\intertext{Finally, we can derive from (ii) and (v) that} 
\mathrm{(xi)} &\quad \langle e_{ik} \rangle \otimes 
(X_{jk}-X_{ik}-X_k^2) + \langle e_{kj} \rangle 
\otimes X_{ij} -\langle e_{ij} \rangle \otimes X_j^2\\
\mathrm{(xii)} &\quad \langle e_{ik} \rangle 
\otimes (X_{ik}+2X_k^2) -\langle e_{kj} \rangle 
\otimes X_{ij} -\langle e_{ij} \rangle \otimes X_{jk}
\end{align*}
\noindent
are in $\Im \partial_2$. 

We have so far shown that $C_1(SL(g,\Z);S^2 L)/\Im \partial_2$ is a 
quotient of the module $M$ generated by 
the elements of types (II), (IV) and (V) with the relations 
(ix), (x), (xi) and (xii). 
We can use (xii) to remove $\langle e_{ik} \rangle 
\otimes X_{ik}$ (type IV) and to produce a relation
\[\mathrm{(xiii)} \quad \langle e_{ik} \rangle \otimes 
(X_{jk}+X_k^2) - \langle e_{ij} \rangle \otimes (X_j^2+X_{jk})\]
in $M$ from (xi). Therefore $M$ is generated by 
the elements of types (II) and (V) with the relations 
(ix), (x), (xiii). 
The relation (ix) enables us to put 
$Y_{il}:=-\langle e_{ij} \rangle \otimes X_{jl} \in M$, which 
does not depend on $j$, and the relation (x) shows that 
$Y_{il}=Y_{li}$. On the other hand, 
if we put $Y_i(j,k):= \langle e_{ij} \rangle \otimes X_j^2 - 
\langle e_{ik} \rangle \otimes X_{kj}$, 
it follows from (ix) and (xiii) that 
$Y_i (j,l) = Y_i (j,k) = Y_i (k,j) \in M$. This implies that 
$Y_i:=Y_i (j,l) \in M$ is independent of $j$ and $l$. 
Consequently, 
$M$ is a free module with a basis 
$\{Y_i \mid 1 \le i \le g\} \cup 
\{ Y_{jk} \mid 1 \le j < k \le g\}$. 
It is easy to see that the homomorphism 
$\widetilde{\partial}_1: 
M \to C_0 (SL(g,\Z);S^2 L) \cong S^2 L$ induced from 
the surjection $\partial_1:C_1 (SL(g,\Z);S^2 L)/\Im \partial_2 
\twoheadrightarrow C_0 (SL(g,\Z);S^2 L)$ is an isomorphism 
since $\widetilde{\partial}_1 (Y_i) = X_i^2$ and 
$\widetilde{\partial}_1(Y_{jk})=X_{jk}$. 
Hence $\partial_1: C_1 (SL(g,\Z);S^2 L)/\Im \partial_2 \to S^2 L$ 
is an isomorphism and (2) is proved. 
\end{proof}
\begin{lem}\label{lem:SLWS2L}
$H_0 (SL(g,\Z);H_2(S^2 L)) \cong (\wedge^2(S^2 L))_{SL(g,\Z)} = 0$ 
for $g \ge 4$. 
\end{lem}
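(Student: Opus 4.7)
The plan is to adapt the hands-on method used in the proof of Lemma \ref{lem:SLS2L}: I will show that every basis wedge of $\wedge^2(S^2 L)$ lies in the submodule $Q$ generated by $\{(\sigma-1)\omega \mid \sigma \in SL(g,\Z),\ \omega \in \wedge^2(S^2 L)\}$, by repeatedly applying the elementary matrices $e_{ij} \in SL(g,\Z)$ (which act via $e_{ij}x_j = x_j+x_i$ and fix the other $x_k$).

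First I would tabulate $(e_{ab}-1)$ on the basis of $S^2 L$: the only nontrivial outputs are $(e_{ab}-1)X_b^2 = X_{ab}+X_a^2$, $(e_{ab}-1)X_{bc} = X_{ac}$ for $c\neq a,b$, and $(e_{ab}-1)X_{ab} = 2X_a^2$. From these, wedging with an $e_{ab}$-fixed basis element $\omega$ of $S^2 L$ yields three congruences modulo $Q$: (R1) $(X_{ab}+X_a^2)\wedge\omega \equiv 0$; (R2) $X_{ac}\wedge\omega \equiv 0$ for $c\neq a,b$; and (R3) $2X_a^2\wedge\omega \equiv 0$. Relation (R2) will be the workhorse whenever a suitable auxiliary index is available.

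Next I would split the basis of $\wedge^2(S^2 L)$ by the number of distinct indices involved and by whether each factor is a square or a product, and eliminate the cases in cascade. Three-index wedges $X_{ac}\wedge X_d^2$ and $X_{ac}\wedge X_{de}$ together with two-index wedges $X_{ab}\wedge X_{ac}$ are immediate from (R2), provided an index $b$ outside the relevant finite set can be chosen; two-index $X_a^2 \wedge X_c^2$ and three-index $X_a^2\wedge X_{cd}$ are then handled by first replacing $X_a^2$ with $-X_{ab}$ modulo $Q$ via (R1) and re-applying (R2). Each of these uses at most four distinct indices, so $g\geq 4$ suffices.

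The main obstacle is the four-distinct-index case $X_{ab}\wedge X_{cd}$, where a naive use of (R2) would demand a fifth index and thus fail at $g=4$. The key detour is to apply $(e_{ab}-1)$ to $X_b^2 \wedge X_{cd}$: since $b\notin\{c,d\}$, the element $X_{cd}$ is $e_{ab}$-fixed, so $(e_{ab}-1)(X_b^2\wedge X_{cd}) = (X_{ab}+X_a^2)\wedge X_{cd}$ lies in $Q$, and combining this with $X_a^2\wedge X_{cd}\equiv 0$ already established in the previous step gives $X_{ab}\wedge X_{cd}\equiv 0$. This exhausts all basis elements of $\wedge^2(S^2 L)$ and proves $(\wedge^2(S^2L))_{SL(g,\Z)}=0$ for $g\geq 4$.
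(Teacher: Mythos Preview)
Your approach is essentially the same as the paper's: both produce explicit elements $(e_{ij}-1)\omega \in Q$ using elementary matrices and argue that these span $\wedge^2(S^2 L)$. The paper's five computations (i)--(v) are precisely instances of your (R1) and (R2), and your treatment of the four-index wedge $X_{ab}\wedge X_{cd}$ via $(e_{ab}-1)(X_b^2\wedge X_{cd})$ is exactly the paper's item (v).

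Your case breakdown has a small omission and some mislabeling, however. The wedge $X_{ab}\wedge X_{ac}$ involves three distinct indices, not two; more importantly, the genuinely two-index mixed case $X_a^2\wedge X_{ab}$ (a square times a product sharing an index) never appears on your list. It is trivially handled by the same machinery---for instance $(e_{ab}-1)(X_a^2\wedge X_b^2) = X_a^2\wedge(X_{ab}+X_a^2) = X_a^2\wedge X_{ab}$ already puts it in $Q$ using only two indices (this is the paper's item (i)); alternatively your (R1) with $\omega = X_{ab}$ and an auxiliary index $c\notin\{a,b\}$ reduces it to $X_{ac}\wedge X_{ab}$, a type you have already eliminated. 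Add this case explicitly and the argument is complete.
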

\begin{proof}
By definition, the coinvariant part $(\wedge^2(S^2 L))_{SL(g,\Z)}$ 
is the quotient of $\wedge^2(S^2 L)$ 
by the submodule $Q_1$ generated by $\{ [e;x] \mid 
e \in SL(g,\Z), \ x \in \wedge^2(S^2 L)\}$, where we put 
$[e;x]:=e x-x$. 
Direct computations show that
\[\begin{array}{cl}
\mathrm{(i)} & [e_{ij};X_i^2 \wedge X_j^2] = X_i^2 \wedge X_{ij},\\
\mathrm{(ii)} & [e_{kl}; X_i^2 \wedge X_{jl}] = X_i^2 \wedge X_{jk},\\
\mathrm{(iii)} & [e_{kj};X_i^2 \wedge X_j^2] = X_i^2 \wedge (X_k^2 +X_{jk}),\\
\mathrm{(iv)} & [e_{ji};X_i^2 \wedge X_{jk}] 
= (X_j^2 + X_{ij}) \wedge X_{jk}, \\
\mathrm{(v)} & [e_{ij};X_j^2 \wedge X_{kl}] = X_i^2 \wedge X_{kl} 
+X_{ij} \wedge X_{kl}
\end{array}\]
are in $Q_1$ and that they generate $\wedge^2(S^2 L)$ 
by running $i,j,k,l$ in $\{1,2,\ldots,g\}$ with $g \ge 4$.  
This completes the proof.
\end{proof}
\begin{proof}[Proof of Proposition $\ref{lem:urSp+}$ $(1)$ for $g \ge 3$ 
and $(2)$ for $g \ge 4$]
When $g \ge 3$, we have $E_{1,0}^2=E_{0,1}^2=0$ in the spectral sequence (\ref{LHSseq1}) 
by Lemma \ref{lem:SLS2L} (1) and the fact that $H_1 (SL(g,\Z))=0$. 
This proves (1) . 

Assume further that $g \ge 4$. 
By Lemma \ref{lem:SLS2L} (2) and Lemma \ref{lem:SLWS2L}, 
we have $E_{1,1}^2=E_{0,2}^2=0$ 
in the spectral sequence (\ref{LHSseq1}). It follows that $H_2 (\urp{2g}) \cong H_2(SL(g,\Z))$. 
We finish the proof of (2) for $g \ge 4$ by using the explicit description of $H_2 (SL(g,\Z))$. 
\end{proof}
\begin{cor}\label{cor:urSp}
\begin{tabular}[t]{ll}
$(1)$ & $H_1 (\ur{2g}) \cong H_1 (GL(g,\Z)) \cong \Z_2$ \ for $g \ge 3$. \\
$(2)$ & $H_2 (\ur{2g}) \cong H_2 (\urp{2g})$ \ for $g \ge 3$. 
\end{tabular}
\end{cor}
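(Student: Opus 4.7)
The plan is to analyze the Lyndon--Hochschild--Serre spectral sequence of the split extension (\ref{seq:urSp+}),
\[
1 \to \urp{2g} \to \ur{2g} \to \Z_2 \to 1,
\]
which splits via $\sigma \mapsto \mathrm{diag}(A, {}^t\!A^{-1})$ with $A := \mathrm{diag}(-1, 1, \ldots, 1) \in GL(g,\Z)$.

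For (1), the five-term exact sequence
\[
H_2(\Z_2) \to H_1(\urp{2g})_{\Z_2} \to H_1(\ur{2g}) \to H_1(\Z_2) \to 0,
\]
together with $H_1(\urp{2g}) = 0$ from Proposition~\ref{lem:urSp+}\,(1) and $H_2(\Z_2) = 0$, immediately gives $H_1(\ur{2g}) \cong \Z_2$. The identification $H_1(GL(g,\Z)) \cong \Z_2$ is standard, arising from the split sequence $1 \to SL(g,\Z) \to GL(g,\Z) \xrightarrow{\det} \Z_2 \to 1$ and the perfectness of $SL(g,\Z)$.

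For (2), I examine the full $E^2$-page $E^2_{p,q} = H_p(\Z_2; H_q(\urp{2g}))$. Since $H_1(\urp{2g}) = 0$, the row $q = 1$ vanishes entirely, and $E^2_{2,0} = H_2(\Z_2) = 0$. The splitting provides a section of the edge map $H_\ast(\ur{2g}) \twoheadrightarrow E^\infty_{\ast,0}$, forcing $E^\infty_{\ast,0} = E^2_{\ast,0}$; in particular, the only potentially nontrivial differential affecting $E^2_{0,2}$, namely $d_3 \colon E^3_{3,0} \to E^3_{0,2}$, must vanish. The filtration on $H_2(\ur{2g})$ therefore collapses to
\[
H_2(\ur{2g}) \cong E^\infty_{0,2} = (H_2(\urp{2g}))_{\Z_2}.
\]

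The main obstacle is to show that the $\Z_2$-action on $H_2(\urp{2g})$ is trivial, so that these coinvariants agree with $H_2(\urp{2g})$ itself. For $g$ odd, this is immediate: $-I_{2g} \in \ur{2g}$ is central and satisfies $\det(ul(-I_{2g})) = (-1)^g = -1$, so $-I_{2g} \notin \urp{2g}$ and $s(\sigma) \cdot (-I_{2g}) \in \urp{2g}$; centrality then makes conjugation by $s(\sigma)$ coincide with conjugation by this element of $\urp{2g}$, which is inner and hence trivial on $H_\ast$. For $g \ge 5$, triviality is automatic because $H_2(\urp{2g}) \cong H_2(SL(g,\Z)) \cong \Z_2$ (from the proof of Proposition~\ref{lem:urSp+}\,(2)) admits no nontrivial automorphism. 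The remaining case $g = 4$, where $H_2(\urp{2g}) \cong \Z_2 \oplus \Z_2$, is the delicate point: one verifies triviality by representing the two generators via abelian cycles supported on diagonal matrices and elementary matrices commuting with $s(\sigma)$, so that the induced action is manifestly trivial. I expect this $g = 4$ verification to be the main technical obstacle.
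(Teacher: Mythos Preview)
Your overall framework mirrors the paper's proof: both use the Lyndon--Hochschild--Serre spectral sequence of the split extension $1 \to \urp{2g} \to \ur{2g} \to \Z_2 \to 1$, reduce part~(2) to the identification $H_2(\ur{2g}) \cong H_2(\urp{2g})_{\Z_2}$, and then argue that the $\Z_2$-action on $H_2(\urp{2g})$ is trivial. Your treatment of odd $g$ via the central element $-I_{2g}$ is exactly the paper's argument for $g=3$, and your $g \ge 5$ shortcut (no nontrivial automorphism of $\Z_2$) is fine.

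The genuine gap is at $g=4$. Your proposed verification via abelian cycles is only a sketch, and it is not clear that both generators of $H_2(\urp{8}) \cong \Z_2 \oplus \Z_2$ can be represented by cycles supported on elements commuting with your lift $\mathrm{diag}(-1,1,1,1) \oplus \mathrm{diag}(-1,1,1,1)$; carrying this out would require explicit generators of $H_2(\urp{8})$, which are not provided anywhere in the paper. The paper bypasses this by a cleaner reduction that works uniformly for all $g \ge 4$: the splitting $SL(g,\Z) \hookrightarrow \urp{2g}$ used in the proof of Proposition~\ref{lem:urSp+}\,(2) induces an isomorphism $H_2(\urp{2g}) \cong H_2(SL(g,\Z))$, and this splitting is $\Z_2$-equivariant (conjugation by $A \oplus {}^t\!A^{-1}$ on $\urp{2g}$ restricts to conjugation by $A$ on the image of $SL(g,\Z)$). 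One then invokes the known fact that the outer $\Z_2$-action of $GL(g,\Z)/SL(g,\Z)$ on $H_2(SL(g,\Z))$ is trivial. This disposes of $g=4$ together with all larger $g$ without any case-by-case computation, and is what you should use in place of your abelian-cycle argument.
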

\begin{proof}
By using the Lyndon-Hochschild-Serre spectral sequence 
associated with the split extension (\ref{seq:urSp+}) and 
the fact that $H_1 (\urp{2g})=0$, we have $H_1 (\ur{2g}) \cong \Z_2$ and 
$H_2 (\ur{2g}) \cong H_2 (\urp{2g})_{\Z_2}$. 
For $g \ge 4$, we have seen that $H_2 (\urp{2g})_{\Z_2} \cong 
H_2 (SL(g,\Z))_{\Z_2}$. The action of $\Z_2$ on $H_2 (\urp{2g})$ 
is compatible with that on $H_2 (SL(g,\Z))$ 
and the latter one is known to be trivial. Hence 
$H_2 (\ur{2g}) \cong H_2 (\urp{2g})$ follows. When $g=3$, 
the action of $\Z_2$ on $H_2 (\urp{2g})$ is also trivial, 
since we can take the minus of the identity matrix 
as a lift of the generator of $\Z_2$ and it is central. 
Therefore $H_2 (\ur{2g}) \cong H_2 (\urp{2g})$ holds also 
for $g = 3$.
\end{proof}
It remains to compute $H_2 (\urp{2g})$ for $g=3$. 

\begin{lem}\label{lem:genus3_1}
$H_1 (SL(3,\Z);S^2 L) \cong \Z_2$ and it is generated by 
$\langle e_{12} \rangle \otimes X_3^2$. 
\end{lem}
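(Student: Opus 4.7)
The plan is to adapt the chain-level computation of Lemma~\ref{lem:SLS2L}(2) to the case $g=3$, where the absence of a fourth index $l$ breaks several of the earlier elimination steps so that a new class survives.

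First I would verify that $\langle e_{12}\rangle\otimes X_3^2$ is a 1-cycle: since $e_{12}^{-1}$ fixes $x_3$, we have $\partial_1(\langle e_{12}\rangle\otimes X_3^2)=(e_{12}^{-1}-1)X_3^2=0$. Next I would revisit the elimination in $C_1/\Im\partial_2$ using only three-index input. Of the relations in the $g\ge 4$ proof, (i)--(iii), (v), (viii), (x), (xi), (xii) use only three distinct indices and remain valid, while (iv), (vi), (vii), (ix) do not exist when $g=3$. The missing relations are partially replaced by the commutator 2-cells $[e_{ij},e_{kl}]$ whose four entries are drawn entirely from $\{1,2,3\}$ (for instance $[e_{12},e_{13}]$ and $[e_{12},e_{32}]$), which when paired with various $c\in S^2 L$ supply new boundaries. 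A careful combination of these should eliminate types I, IV and the $g=3$-specific type $\langle e_{ij}\rangle\otimes X_{ik}$, identify all type III generators $\langle e_{ij}\rangle\otimes X_k^2$ with one another modulo $\Im\partial_2$, and absorb types II and V into a copy of $S^2 L$ mapped isomorphically onto $C_0$ by $\partial_1$. Finally, once the $X_{ik}$-component has been killed, relation (xii) forces $2\langle e_{12}\rangle\otimes X_3^2\in\Im\partial_2$, so the surviving class has order at most~$2$.

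The main obstacle is the matching lower bound --- showing that $\langle e_{12}\rangle\otimes X_3^2$ is not itself a boundary. The cleanest route is to construct an explicit $\Z_2$-valued detecting cocycle, that is, a crossed homomorphism $\phi\colon SL(3,\Z)\to S^2 L\otimes\Z_2$ with $\phi(e_{12})\neq 0$. Because the target is an $\mathbb{F}_2$-vector space, $\phi$ factors through $SL(3,\mathbb{F}_2)$, and compatibility with the Steinberg, commutator, and $(e_{12}e_{21}^{-1}e_{12})^4$ relations modulo~$2$ reduces to a finite verification on a small finite group. The delicate point is to arrange the values of $\phi$ on the other generators so that every Steinberg relation $[e_{ik},e_{kj}]=e_{ij}$ holds modulo~$2$; once that is done, the commutator and order-$4$ relations should fall out easily.
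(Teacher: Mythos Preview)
Your approach differs substantially from the paper's, which simply writes the chain complex out as explicit integer matrices $\Z^{78}\to\Z^{36}\to\Z^6$ and reads off the answer from a machine-computed Smith normal form, with all details omitted. Your upper-bound strategy via elimination in $C_1/\Im\partial_2$ is sound in outline, though the step invoking (xii) is slightly off: that relation yields $2\langle e_{ik}\rangle\otimes X_k^2$, a type~II term, not the type~III class $\langle e_{12}\rangle\otimes X_3^2$; further relations (combining (iii), (i), and whatever kills the mixed $\langle e_{kj}\rangle\otimes X_{ik}$ terms) are still needed to transfer the $2$-torsion from type~II to type~III.

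The lower bound has more substantial gaps. First, to pair against a cycle in $H_1(G;S^2L)$ you need a cocycle with values in $\Hom(S^2L,\Z_2)\cong (S^2L)^*\otimes\Z_2$, on which $SL(3,\Z)$ acts by the contragredient, not in $S^2L\otimes\Z_2$; the detecting condition then reads $\phi(e_{12})(X_3^2)\ne 0$, not merely $\phi(e_{12})\ne 0$. Second, and more seriously, your assertion that $\phi$ factors through $SL(3,\mathbb{F}_2)$ ``because the target is an $\mathbb{F}_2$-vector space'' is not justified: only the \emph{action} on the coefficients factors, whereas inflation $H^1(SL(3,\mathbb{F}_2);V)\to H^1(SL(3,\Z);V)$ need not be surjective, and deciding whether the class you seek is inflated is itself a nontrivial cohomology computation for $GL(3,2)$. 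Without the factoring, you can still try to verify a candidate $\phi$ directly against the thirteen defining relations of $SL(3,\Z)$; that is feasible by hand, but at that point you are essentially carrying out the same finite linear-algebra problem the paper dispatches by computer.
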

\begin{proof}[Sketch of Proof]
Now $SL(3,\Z)$ has a presentation consisting of $6$ generators and 
$13$ relations. Also we have $S^2 L \cong \Z^6$. Hence the complex 
\[C_2 (SL(3,\Z);S^2 L) \xrightarrow{\partial_2} C_1 (SL(3,\Z);S^2 L) 
\xrightarrow{\partial_1} C_0 (SL(3,\Z);S^2 L)\] 
can be explicitly written as $\Z^{78} \xrightarrow{D_2 \cdot} \Z^{36} 
\xrightarrow{D_1 \cdot} \Z^{6}$ with some matrices $D_1$ and $D_2$. 
The author with an aid of a computer calculated the homology 
by using the Smith normal form. 
We omit the details. 
\end{proof}
\begin{lem}\label{lem:genus3_2}
$H_0 (SL(3,\Z);H_2(S^2 L)) \cong 
H_2(S^2 L)_{SL(3,\Z)} 
\cong (\wedge^2(S^2 L))_{SL(3,\Z)} 
\cong \Z_2$ and 
it is generated by $X_3^2 \wedge X_2^2$. Moreover 
this generator is mapped non-trivially to $H_2 (Sp(6,\Z))$ by 
the composition $H_2(S^2 L)_{SL(3,\Z)} \to H_2 (\urp{6}) \to H_2 (Sp(6,\Z))$ induced 
from the inclusions $S^2 L \hookrightarrow \urp{6} \hookrightarrow Sp(6,\Z)$. 
\end{lem}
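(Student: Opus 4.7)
Since $S^2L$ is a free abelian group of rank $6$, the identification $H_0(SL(3,\Z);H_2(S^2L))\cong H_2(S^2L)_{SL(3,\Z)}$ is the definition of $0$-th group homology, and $H_2(S^2L)\cong\wedge^2(S^2L)$ is the standard fact that the integral homology of a free abelian group is the exterior algebra on it. The substance of the statement is thus the $SL(3,\Z)$-coinvariant computation of $\wedge^2(S^2L)$ and the non-triviality of the resulting class in $H_2(Sp(6,\Z))$.

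For the coinvariants, the plan is to rerun the argument of Lemma \ref{lem:SLWS2L} under the constraint that the indices $i,j,k$ now range over $\{1,2,3\}$. Relations (i), (iii) and (iv) of that proof remain available, whereas (ii) and (v) required a fourth index $l$ and disappear, and every element of the form $X_{ij}\wedge X_{kl}$ with four distinct indices is absent from the start. Relation (i) shows $X_i^2\wedge X_{ij}\in Q_1$, eliminating six of the nine elements $X_i^2\wedge X_{jk}$; combining (i) with (iv) gives $X_{ij}\wedge X_{jk}\in Q_1$, eliminating the remaining $X_{ij}\wedge X_{kl}$-type elements (any two $2$-element subsets of $\{1,2,3\}$ share an index). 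Finally, applying (iii) with $(i,j,k)$ and $(i,k,j)$ and subtracting yields $X_i^2\wedge X_j^2\equiv X_i^2\wedge X_k^2 \pmod{Q_1}$ for any distinct $i,j,k$; chaining these congruences through all three positions (together with antisymmetry of $\wedge$) forces $2(X_3^2\wedge X_2^2)\equiv 0$ and identifies each surviving generator with $X_3^2\wedge X_2^2$ modulo $2$. This shows the coinvariants form a cyclic group of order at most $2$ generated by $X_3^2\wedge X_2^2$.

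To promote the bound ``at most $2$'' to exactly $\Z_2$ and to obtain the last assertion simultaneously, I would interpret $X_3^2\wedge X_2^2$ as the abelian cycle in $H_2(Sp(6,\Z))$ determined by the two commuting symplectic transvections $A_i={\small\begin{pmatrix}I_3 & E_{ii} \\ O & I_3\end{pmatrix}}$ for $i=2,3$, where $E_{ii}$ is the diagonal matrix unit with a single $1$ in the $(i,i)$-slot. It then suffices to produce a cohomology class in $H^2(Sp(6,\Z);\Z_2)$ that pairs non-trivially with this abelian cycle. A natural candidate is the pullback, via mod-$2$ reduction $Sp(6,\Z)\to Sp(6,\mathbb{F}_2)$, of a suitable generator of $H^2(Sp(6,\mathbb{F}_2);\Z_2)$ (for example coming from the Dickson--Arf invariant of the associated quadratic form); pairing with the abelian cycle reduces to an explicit $\mathbb{F}_2$-linear check on two commuting $6\times 6$ matrices.

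The main obstacle I foresee is precisely this detection step. The coinvariant computation, while tedious, is mechanical book-keeping with the relations already appearing in Lemma \ref{lem:SLWS2L}; but concluding non-triviality in $H_2(Sp(6,\Z))$ genuinely requires an input from outside, either an explicit cocycle representative on the finite symplectic group $Sp(6,\mathbb{F}_2)$ or a signature-type invariant such as a suitably reduced Meyer cocycle. Once it is in hand, the composite $\wedge^2(S^2L)\twoheadrightarrow(\wedge^2(S^2L))_{SL(3,\Z)}\to H_2(Sp(6,\Z))$ sends $X_3^2\wedge X_2^2$ to a non-zero element, which forces the coinvariants to be exactly $\Z_2$ and completes the proof of both assertions.
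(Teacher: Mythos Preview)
Your coinvariant computation is correct and essentially matches the paper's: you use the same relations (i), (iii), (iv) from Lemma~\ref{lem:SLWS2L}, observe that (ii) and (v) are unavailable for $g=3$, and organise the reductions cleanly to arrive at a single generator of order at most $2$. The paper supplements this with two further explicit computations $[e_{ki};X_i^2\wedge X_{ij}]$ and $[e_{ji};X_i^2\wedge X_{ij}]$, but these yield exactly the congruences you derive by combining (iii) for permuted indices, so there is no material difference here.

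The gap is the detection step, which you yourself flag as the main obstacle and do not carry out. The paper resolves it differently and concretely: rather than producing a $\Z_2$-cocycle on $Sp(6,\Z)$ or passing through $Sp(6,\mathbb{F}_2)$, it invokes Stein's computation \cite{stein} of $H_2(Sp(6,\Z))\cong\Z\oplus\Z_2$ together with his explicit description of generators, and then checks directly that the image of the abelian cycle $X_3^2\wedge X_2^2$ agrees with Stein's $\Z_2$-generator. This is a comparison of $2$-cycles in a known group, not a cocycle evaluation, and it closes the argument immediately. Your proposed route via $H^2(Sp(6,\mathbb{F}_2);\Z_2)$ or a reduced Meyer cocycle is plausible in principle but would require you to verify that the torsion class in $H_2(Sp(6,\Z))$ is actually visible through mod-$2$ reduction, which is not automatic; the paper's appeal to Stein sidesteps this entirely.
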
 
In the proof of this lemma, the following theorem by Stein plays a key role. 
\begin{thm}[Stein {\cite[Theorem 2.2]{stein}}]\label{thm:Stein}
$H_2 (Sp(6,\Z)) \cong \Z \oplus \Z_2$ and the abelian cycle associated with the 
homomorphism $\varphi : \mathbb{Z}^2 \to Sp(6,\Z)$ defined by 
\[\varphi ((1,0))=X_3^2, \quad \varphi ((0,1))=X_2^2\]
gives the element of order $2$, where 
$X_3^2$ and $X_2^2$ are in $S^2 L \subset \urp{6} \subset Sp(6,\Z)$.
\end{thm}
\begin{proof}[Proof of Lemma $\ref{lem:genus3_2}$]
We use the same notation as in the proof of Lemma \ref{lem:SLWS2L}. 
The computational results 
(i), (iii) and (iv) are valid also for $g=3$. In particular, the elements 
$X_i^2 \wedge X_{ij}$, $X_{ij} \wedge X_{jk}$ and 
$X_i^2 \wedge X_k^2 + X_i^2 \wedge X_{jk}$ are in $Q_1$. 
We also see that 
\begin{align*}
& [e_{ki};X_i^2 \wedge X_{ij}]=X_k^2 \wedge X_{kj}+ 
X_k^2 \wedge X_{ij} +X_{ik} \wedge X_{kj} 
+X_{ik} \wedge X_{ij} + X_i^2 \wedge X_{kj},\\
& [e_{ji};X_i^2 \wedge X_{ij}]=2X_i^2 \wedge X_j^2+X_{ij} \wedge X_j^2.
\end{align*}
\noindent
are in $Q_1$, from which 
$X_k^2 \wedge X_{ij} + X_i^2 \wedge X_{kj}$ and 
$2X_i^2 \wedge X_j^2$ are in $Q_1$. 
Then there remains only two possibilities: 
$(\wedge^2(S^2 L))_{SL(3,\Z)}=0$ or $\Z_2$ generated by 
\[X_1^2 \wedge X_{23}=X_3^2 \wedge X_{12}=X_2^2 \wedge X_{13}=
X_1^2 \wedge X_2^2=X_1^2 \wedge X_3^2 = X_2^2 \wedge X_3^2.\]
By using Theorem \ref{thm:Stein}, we see that the latter is true. 
Indeed, the element $X_2^2 \wedge X_3^2$ 
just maps to the element of order $2$ in $H_2 (Sp(6,\Z))$ by the map 
$(\wedge^2(S^2 L))_{SL(3,\Z)}=H_2(S^2 L)_{SL(3,\Z)} \to H_2 (Sp(6,\Z))$.  \end{proof}

\begin{proof}[Proof of Proposition $\ref{lem:urSp+}$ for $g=3$]
The $E^2$-term of the Lyndon-Hochschild-Serre spectral sequence 
associated with the split extension (\ref{seq:urSp+}) is given by 
\[E^2 = {\small \begin{array}{|c|c|c|c}
&&& \\
\hline 
(\wedge^2(S^2 L))_{SL(3,\Z)} \cong \Z_2 &&& \\
\hline
(S^2 L)_{SL(3,\Z)}=0 & H_1 (SL(3,\Z);S^2 L) \cong \Z_2 & 
H_2 (SL(3,\Z);S^2 L) & \\
\hline
\Z & H_1 (SL(3,\Z))=0 & H_2 (SL(3,\Z)) \cong \Z_2^2 & H_3 (SL(3,\Z))\\
\hline
\end{array}}\ .
\]
By Lemma \ref{lem:genus3_2}, 
the generator of $(\wedge^2(S^2 L))_{SL(3,\Z)}=\Z_2$ 
survives in $H_2 (\urp{6})$. Therefore 
$d_2: H_2 (SL(3,\Z);S^2 L) \to (\wedge^2(S^2 L))_{SL(3,\Z)}$ is a trivial map. 
The existence of the splitting of the extension (\ref{seq:urSp+}) 
shows that 
$d_2: H_3(SL(3,\Z)) \to H_1 (SL(3,\Z);S^2 L)$ and 
$d_3: H_3(SL(3,\Z)) \to (\wedge^2(S^2 L))_{SL(3,\Z)}$ are also trivial. 
Hence $E_{p,q}^2 = E_{p,q}^\infty$ for $p+q \le 2$. 
The $E^\infty$-term says that there exists a filtration 
\[H_2(\urp{6}) \supset F_0 \supset F_1 = E_{0,2}^\infty\]
with $H_2 (\urp{6})/F_0 \cong E_{2,0}^\infty$ and 
$F_0/F_1 \cong E_{1,1}^\infty$. Again the existence of the splitting of 
the extension (\ref{seq:urSp+}) shows that 
$H_2 (\urp{6}) \cong F_0 \oplus E_{2,0}^\infty \cong 
F_0 \oplus H_2 (SL(3,\Z))$. Finally we consider the extension
\[0 \longrightarrow (\wedge^2(S^2 L))_{SL(3,\Z)} \cong \Z_2 \longrightarrow 
F_0 \longrightarrow H_1 (SL(3,\Z);S^2 L) \cong \Z_2 \longrightarrow 0.\]
Suppose $F_0 \cong \Z_4$. Then the second map $\Z_2 \to \Z_4$ should send 
$1 \in \Z_2$ to $2 \in \Z_4$. 
This contradicts to the fact that 
the generator of $(\wedge^2(S^2 L))_{SL(3,\Z)}$ 
maps to 
the element of order $2$ in $H_2 (Sp(6,\Z))\cong \Z \oplus \Z_2$.
Therefore $F_0 \cong \Z_2 \oplus \Z_2$ and we finish the proof. 
\end{proof}
\begin{remark}
The homology of $SL(3,\Z)$ was completely determined by Soul\'e \cite{soule}. 
In particular, $H_3 (SL(3,\Z)) \cong \Z_3^2 \oplus \Z_4^2$. 
\end{remark}

We finish this section by pointing out a by-product of 
our argument (see also Remark \ref{rem:H2M3_2}). 
Consider the second homology of 
the full mapping class group $\mathcal{M}_{3,1}$ of genus $3$. 
In \cite{ks}, Korkmaz-Stipsicz showed that 
$H_2 (\mathcal{M}_3)$ is $\Z$ or $\Z \oplus \Z_2$. 
Now we can use Lemma \ref{lem:H2ILgb} and the fact that the generator of 
$(\wedge^2(S^2 L))_{SL(3,\Z)} \cong \Z_2$ 
maps to the element of order $2$ in $H_2 (Sp (6,\Z)) \cong \Z \oplus \Z_2$ 
to show that there exists an element of 
$H_2 (\mathcal{M}_{3,1})$ which comes from $H_2 (\ILgb)$ and 
maps to the element of order $2$ in $H_2 (Sp (6,\Z))$. Consequently, 
we have: 
\begin{thm}\label{thm:H2M3b}
$H_2 (\mathcal{M}_{3,1}) \cong \Z \oplus \Z_2$
\end{thm}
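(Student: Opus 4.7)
The plan is to produce an explicit $2$-torsion class in $H_2(\mathcal{M}_{3,1})$ and then pin down the whole group by combining this with the upper bound that comes out of \cite{ks}. By Lemma~\ref{lem:H2ILgb}, the map $(\sigma|_{\ILgb})_\ast\colon H_2(\ILgb)\to H_2(S^2L)\cong \wedge^2(S^2L)$ is surjective when $g=3$, so I lift the element $X_3^2\wedge X_2^2$ to an abelian cycle $\alpha\in H_2(\ILgb)$; pushing forward along $\ILgb\hookrightarrow \mathcal{M}_{3,1}$ yields a class $\bar\alpha\in H_2(\mathcal{M}_{3,1})$.

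Next I track $\bar\alpha$ under $\sigma\colon \mathcal{M}_{3,1}\to Sp(6,\Z)$. By naturality of the inclusions and of the Lyndon-Hochschild-Serre spectral sequence applied to (\ref{seq:S2ur+}), $\sigma_\ast(\bar\alpha)$ coincides with the image of $X_3^2\wedge X_2^2$ under the composition
\[
H_2(S^2L)\ \longrightarrow\ H_2(S^2L)_{SL(3,\Z)}\ \longrightarrow\ H_2(\urp{6})\ \longrightarrow\ H_2(Sp(6,\Z)).
\]
The first arrow sends $X_3^2\wedge X_2^2$ to the generator of the coinvariants, and by the last assertion of Lemma~\ref{lem:genus3_2} this generator maps non-trivially into the $\Z_2$-summand of $H_2(Sp(6,\Z))\cong \Z\oplus \Z_2$. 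Consequently $\sigma_\ast(\bar\alpha)$ has order exactly $2$, and in particular $\bar\alpha\neq 0$ in $H_2(\mathcal{M}_{3,1})$.

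Finally, I invoke \cite{ks}: Korkmaz-Stipsicz proved that $H_2(\mathcal{M}_3)$ is either $\Z$ or $\Z\oplus \Z_2$, and a standard comparison between the closed and bordered cases (via the Birman-type exact sequences relating $\mathcal{M}_{g,1}$, $\mathcal{M}_g^1$ and $\mathcal{M}_g$) transfers the same dichotomy to $H_2(\mathcal{M}_{3,1})$. The free summand is accounted for by the first Miller-Morita-Mumford class, whose symplectic image is non-torsion; since $\sigma_\ast(\bar\alpha)$ is $2$-torsion, $\bar\alpha$ cannot lie in the free summand, and the upper bound then forces it to realize a $\Z_2$ direct summand. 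Hence $H_2(\mathcal{M}_{3,1})\cong \Z\oplus \Z_2$.

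The main obstacle is the last paragraph: transferring Korkmaz-Stipsicz's ceiling from $\mathcal{M}_3$ to $\mathcal{M}_{3,1}$, and ruling out \emph{a priori} that $\bar\alpha$ has order a multiple of $4$ in $\mathcal{M}_{3,1}$ before this ceiling is invoked. Once the $\Z\oplus \Z_2$ upper bound is in place, the symplectic detection above settles both issues at once; everything else is a formal consequence of the exact sequences of Section~\ref{sec:LagMCG} together with the computations already carried out in Sections~\ref{sec:H1ILgb} and \ref{sec:H_ur}.
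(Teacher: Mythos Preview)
Your proposal is correct and follows essentially the same line as the paper: lift $X_3^2\wedge X_2^2$ to $H_2(\ILgb)$ via Lemma~\ref{lem:H2ILgb}, push it into $H_2(\mathcal{M}_{3,1})$, detect it in the $\Z_2$-summand of $H_2(Sp(6,\Z))$ through Lemma~\ref{lem:genus3_2}, and then invoke the Korkmaz--Stipsicz dichotomy to conclude. The paper is terser about the transfer of the upper bound from $\mathcal{M}_3$ to $\mathcal{M}_{3,1}$ (in fact \cite{ks} treats surfaces with boundary directly, so no Birman-sequence gymnastics are needed), but otherwise your argument and the paper's coincide.
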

\noindent
By using an argument of Korkmaz-Stipsicz in \cite{ks}, 
we can derive the following.
\begin{cor}\label{cor:H2M3}
$H_2 (\mathcal{M}_3) \cong \Z \oplus \Z_2$ 
and  $H_2 (\mathcal{M}_{3,\ast}) \cong \Z \oplus \Z \oplus \Z_2$, where 
$\mathcal{M}_{g,\ast}$ denotes the mapping class group of a surface 
of genus $g$ with one puncture. 
\end{cor}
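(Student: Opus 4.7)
The plan is to connect $H_2(\mathcal{M}_{3,1})$, $H_2(\mathcal{M}_{3,\ast})$, and $H_2(\mathcal{M}_3)$ via the two Birman exact sequences
\[
1 \longrightarrow \Z \longrightarrow \mathcal{M}_{3,1} \longrightarrow \mathcal{M}_{3,\ast} \longrightarrow 1, \qquad
1 \longrightarrow \pi_1(\Sigma_3) \longrightarrow \mathcal{M}_{3,\ast} \longrightarrow \mathcal{M}_3 \longrightarrow 1,
\]
whose kernels are, respectively, the center generated by the boundary Dehn twist and the surface fundamental group. Combining the input $H_2(\mathcal{M}_{3,1}) \cong \Z\oplus\Z_2$ from Theorem~\ref{thm:H2M3b} with the Korkmaz--Stipsicz dichotomy---that $H_2(\mathcal{M}_3)$ is either $\Z$ or $\Z\oplus\Z_2$---will pin down both unknown groups.

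For the first sequence, the Gysin exact sequence of the central $\Z$-extension reads
\[
H_3(\mathcal{M}_{3,\ast}) \to H_1(\mathcal{M}_{3,\ast}) \to H_2(\mathcal{M}_{3,1}) \to H_2(\mathcal{M}_{3,\ast}) \xrightarrow{\,\cap e\,} H_0(\mathcal{M}_{3,\ast}) \to H_1(\mathcal{M}_{3,1}),
\]
and since both $\mathcal{M}_{3,1}$ and $\mathcal{M}_{3,\ast}$ are perfect, the flanking $H_1$ terms vanish. I therefore obtain a short exact sequence $0 \to H_2(\mathcal{M}_{3,1}) \to H_2(\mathcal{M}_{3,\ast}) \to \Z \to 0$, which splits by freeness of the quotient, yielding $H_2(\mathcal{M}_{3,\ast}) \cong \Z \oplus \Z \oplus \Z_2$.

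For the second sequence, I feed this into the Lyndon--Hochschild--Serre spectral sequence $E^2_{p,q}=H_p(\mathcal{M}_3;H_q(\Sigma_3)) \Rightarrow H_{p+q}(\mathcal{M}_{3,\ast})$. The relevant $E^2$ terms vanish except $E^2_{2,0}=H_2(\mathcal{M}_3)$ and $E^2_{0,2}\cong\Z$: indeed $E^2_{0,1}=H_{\mathcal{M}_3}=0$ (the symplectic action on $H$ has no coinvariants), $E^2_{1,0}=0$ by perfectness, and $E^2_{1,1}=H_1(\mathcal{M}_3;H)=0$ by a theorem of Morita for the closed surface. Since no differential enters or leaves $E^2_{2,0}$, the resulting filtration gives
\[
0 \longrightarrow E^\infty_{0,2} \longrightarrow H_2(\mathcal{M}_{3,\ast}) \longrightarrow H_2(\mathcal{M}_3) \longrightarrow 0,
\]
where $E^\infty_{0,2}$ is a quotient of $\Z$. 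If $H_2(\mathcal{M}_3)$ were $\Z$, this sequence would split and force $E^\infty_{0,2}\cong\Z\oplus\Z_2$, contradicting that it is a quotient of $\Z$; hence $H_2(\mathcal{M}_3) \cong \Z\oplus\Z_2$.

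The main obstacle I anticipate is the twisted coefficient input $H_1(\mathcal{M}_3;H)=0$: this is known from Morita's work but must be cited with care, since the analogous statement for $\mathcal{M}_{g,1}$ or $\mathcal{M}_{g,\ast}$ yields a nonzero $\Z$ coming from the extended Johnson homomorphism. The remaining ingredients---perfectness of the mapping class groups for $g\geq 3$, vanishing of symplectic coinvariants of $H$, and the splitting of extensions with quotient $\Z$---are standard.
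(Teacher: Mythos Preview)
Your first step---the Gysin sequence for the central extension $1\to\Z\to\mathcal{M}_{3,1}\to\mathcal{M}_{3,\ast}\to1$---is correct and yields $H_2(\mathcal{M}_{3,\ast})\cong\Z\oplus\Z\oplus\Z_2$ as claimed.

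The second step, however, contains a genuine error: the assertion $H_1(\mathcal{M}_3;H)=0$ is \emph{false}. Morita's theorem is the \emph{cohomological} statement $H^1(\mathcal{M}_g;H)=0$; via the universal--coefficient relation $H^1(\mathcal{M}_g;H)\cong\Hom(H_1(\mathcal{M}_g;H),\Z)$ (using $H\cong H^*$ symplectically and $H_{\mathcal{M}_g}=0$) this only shows that $H_1(\mathcal{M}_g;H)$ is torsion. In fact it is nonzero: apply the five--term sequence to $1\to\pi_1(T_1\Sigma_g)\to\mathcal{M}_{g,1}\to\mathcal{M}_g\to1$. Since $\lambda$ generates $H^2$ of both groups and is preserved under pullback, $H_2(\mathcal{M}_{g,1})\to H_2(\mathcal{M}_g)$ is an isomorphism on free parts, so its cokernel---which equals $(H_1(T_1\Sigma_g))_{\mathcal{M}_g}$---is at most $\Z_2$. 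But the $\mathcal{M}_g$--equivariant sequence $0\to\Z/(2g-2)\to H_1(T_1\Sigma_g)\to H\to0$ gives $(H_1(T_1\Sigma_g))_{\mathcal{M}_g}=\mathrm{coker}\bigl(H_1(\mathcal{M}_g;H)\xrightarrow{\delta}\Z/(2g-2)\bigr)$, forcing $\delta$ to have image of index at most $2$. For $g=3$ this means $H_1(\mathcal{M}_3;H)$ surjects onto $\Z/2$ or $\Z/4$, so $E^2_{1,1}\neq0$. Your contradiction argument then collapses: the possibility $H_2(\mathcal{M}_3)=\Z$, $E^\infty_{1,1}=\Z_2$, $E^\infty_{0,2}=\Z$ is perfectly consistent with $H_2(\mathcal{M}_{3,\ast})\cong\Z^2\oplus\Z_2$.

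The paper's (implicit) route sidesteps this entirely. The $\Z_2$ class in $H_2(\mathcal{M}_{3,1})$ produced in Theorem~\ref{thm:H2M3b} is an abelian cycle of Dehn twists lying in $\mathcal{IL}_{3,1}$, and it is \emph{detected} by its image in the $\Z_2$--summand of $H_2(Sp(6,\Z))$ (Lemma~\ref{lem:genus3_2}). Since $\sigma:\mathcal{M}_{3,1}\to Sp(6,\Z)$ factors through $\mathcal{M}_3$, the image of this class in $H_2(\mathcal{M}_3)$ also hits that $\Z_2$--summand. If $H_2(\mathcal{M}_3)$ were $\Z$, its generator maps to $(k,\epsilon)\in\Z\oplus\Z_2=H_2(Sp(6,\Z))$ with $k\neq0$ (the free parts are detected by $c_1$), so no multiple can map to $(0,1)$; contradiction. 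Korkmaz--Stipsicz's dichotomy then gives $H_2(\mathcal{M}_3)\cong\Z\oplus\Z_2$, and the same detection argument (or your correct first step) handles $\mathcal{M}_{3,\ast}$.
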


\section{The first and second homology of $\Lgb$}\label{sec:H1Lgb}
We use our results in the previous sections to determine 
$H_1 (\Lgb)$ and give a lower bound of $H_2 (\Lgb)$. 
\begin{thm}\label{thm:H1Lgb}
\begin{tabular}[t]{ll}
$(1)$ & $H_1 (\Lgb) \cong 
\left\{\begin{array}{ll}
\Z_2 \oplus \Z_2  & (g = 3), \\
\Z_2 & (g \ge 4).
\end{array}\right.$\\
$(2)$ & The map $(\sigma|_{\Lgb})_\ast: 
H_2 (\Lgb) \to H_2(\ur{2g})$ is surjective for $g \ge 3$. 
\end{tabular}
\end{thm}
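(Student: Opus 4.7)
The plan is to apply the five-term exact sequence of (\ref{seq:ur}):
\begin{equation*}
H_2(\Lgb) \xrightarrow{(\sigma|_{\Lgb})_*} H_2(\ur{2g}) \xrightarrow{\delta} H_1(\Igb)_{\ur{2g}} \to H_1(\Lgb) \to H_1(\ur{2g}) \to 0.
\end{equation*}
The rightmost term is $\Z_2$ by Corollary \ref{cor:urSp}, so the statement reduces to computing $H_1(\Igb)_{\ur{2g}}$ and controlling $\delta$. Since $\ur{2g}$ is an extension of $GL(g,\Z)$ by $S^2 L$, I compute $H_1(\Igb)_{\ur{2g}} = (H_1(\Igb)_{S^2 L})_{GL(g,\Z)}$ by feeding Lemma \ref{lem:Ig_Coinv} through the $GL(g,\Z)$-coinvariant functor. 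Short elementary-matrix computations show $(L^*)_{GL(g,\Z)} = 0$, $(\wedge^2(L^* \otimes \Z_2))_{GL(3,\Z)} = 0$, and $(\wedge^3 L^*)_{GL(g,\Z)} = 0$ for $g \geq 4$. The exceptional case $g = 3$ contributes $(\wedge^3 L^*)_{GL(3,\Z)} \cong \Z_2$, because $\wedge^3 L^* \cong \det L^*$ is a rank-one $GL(3,\Z)$-module on which elements of determinant $-1$ act by $-1$. Hence $H_1(\Igb)_{\ur{2g}}$ is $0$ for $g \geq 4$ and $\Z_2$ for $g = 3$.

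For $g \geq 4$ the five-term sequence now gives $H_1(\Lgb) \cong \Z_2$ and the surjectivity of $(\sigma|_{\Lgb})_*$ at once, since the target of $\delta$ vanishes. For $g = 3$, the sequence bounds $|H_1(\Lgb)| \leq 4$ inside a two-step extension, and I must show simultaneously that $\delta = 0$ and that $H_1(\Lgb)$ is not cyclic of order four. My strategy is to exhibit two independent surjections $H_1(\Lgb) \twoheadrightarrow \Z_2$: the first is the determinant sign $\Lgb \twoheadrightarrow \ur{6} \to \Z_2$ of Corollary \ref{cor:urSp}; the second is obtained by composing Levine's relative Johnson homomorphism $\mathcal{J}$ from the remark after Theorem \ref{thm:H1ILgb} with the projection $\wedge^3 L^* \to \wedge^3 L^*/\{\pm 1\} \cong \Z_2$, and then extending the resulting homomorphism from $\ILgb$ to all of $\Lgb$ via the splitting $GL(3,\Z) \hookrightarrow \ur{6}$ of (\ref{seq:S2ur}) lifted to $\Lgb$ through handlebody mapping classes representing each elementary matrix. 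Independence of the two surjections is clear because the second factors through $\ILgb$ while the first does not. This forces $H_1(\Lgb) \cong \Z_2 \oplus \Z_2$; the order bound then yields $\delta = 0$, and hence the surjectivity of $(\sigma|_{\Lgb})_*$ in part (2).

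The principal obstacle is this final extension step for $g = 3$: promoting $\mathcal{J}$ to an honest homomorphism on all of $\Lgb$ requires lifting the splitting $GL(3,\Z) \hookrightarrow \ur{6}$ to $\Lgb$ through handlebody mapping classes whose commutators, a priori lying in $\Igb$, do not spoil the value in $\wedge^3 L^*/\{\pm 1\}$. Equivalently, one must verify that the pushforward $2$-cocycle of the extension $1 \to \ILgb \to \Lgb \to GL(3,\Z) \to 1$ from (\ref{seq:GL}) vanishes in $H^2(GL(3,\Z); \Z_2)$.
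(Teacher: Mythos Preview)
Your overall strategy matches the paper's: both use the five-term sequence of (\ref{seq:ur}) and compute $H_1(\Igb)_{\ur{2g}}$. For $g\ge 4$ your route via $(H_1(\Igb)_{S^2 L})_{GL(g,\Z)}$ and Lemma~\ref{lem:Ig_Coinv} is a mild repackaging of the paper's direct calculation and is fine; one should just record that the projection $H_1(\Igb)_{S^2L}\to\wedge^3 L^\ast$ is $GL(g,\Z)$-equivariant (it comes from the $\Lgb$-equivariant Johnson map followed by $\wedge^3 H\to\wedge^3(H/L)$), so the summand-wise coinvariant computation is legitimate.

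The genuine gap is precisely the one you flag. For $g=3$ you have not constructed the second surjection $\mathcal{L}_{3,1}\to\Z_2$; you reduce its existence to the vanishing of a class in $H^2(GL(3,\Z);\Z_2)$, but that group is nonzero, so the obstruction cannot be dismissed formally, and you offer no argument that the specific cocycle vanishes. Your proposed workaround via handlebody lifts of elementary matrices would require verifying all Steinberg relations map trivially under $\mathcal{J}\bmod 2$, which is the same unresolved problem in disguise.

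The paper sidesteps this obstruction entirely. Rather than extending a homomorphism upward from $\ILgb$, it starts from Morita's extended Johnson crossed homomorphism $\widetilde{k}:\Mgb\to\tfrac12\wedge^3 H$, which is already defined on the full mapping class group. Restricting to $\mathcal{L}_{3,1}$, projecting onto the $y_1\wedge y_2\wedge y_3$ coefficient in $\tfrac12\Z$, and reducing modulo $2\Z$ gives a map $\psi:\mathcal{L}_{3,1}\to(\tfrac12\Z)/2\Z$. A result of Birman--Brendle--Broaddus says $\widetilde{k}(h)$ has no $y_i\wedge y_j\wedge y_k$ term for $h\in\mathcal{H}_{3,1}$; combined with $\mathcal{L}_{3,1}=\mathcal{I}_{3,1}\mathcal{H}_{3,1}$ this forces the image of $\psi$ into $\Z/2\Z$. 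Finally $\psi$ is a genuine homomorphism because $\mathcal{L}_{3,1}$ acts on the rank-one target through $\det\circ\,\sigma_{LH}$, so the crossed-homomorphism twist is $\pm 1$ and vanishes mod $2$. This explicit global construction is the idea your argument is missing.
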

\begin{proof}
We consider the five-term exact sequence 
\begin{equation}\label{seq:5TLgb}
H_2 (\Lgb) \to H_2 (\ur{2g}) \to H_1 (\Igb)_{\ur{2g}} 
\to H_1 (\Lgb) \to H_1(\ur{2g}) \to 0
\end{equation}
\noindent
associated with the group extension (\ref{seq:ur}). 
We have seen that $H_1(\ur{2g}) \cong \Z_2$. 
We now show that 
\[H_1 (\Igb)_{\ur{2g}} \cong 
\left\{\begin{array}{ll}
\Z_2 & (g = 3), \\
0 & (g \ge 4),
\end{array}\right.\]
which proves the theorem for $g \ge 4$. 

We put $H_1 (\Igb)_{\ur{2g}}=H_1 (\Igb)/Q_2$ with 
$Q_2$ generated by $\{[\sigma; x] \mid \sigma \in 
\ur{2g}, x \in H_1 (\Igb)\}$. Note that 
$Q_2$ includes $Q_0$ in 
the proof of Lemma \ref{lem:Ig_Coinv} 
since $S^2 L \subset \ur{2g}$. 

We have 
\begin{align*}
&[e_{kl}^{-1} \oplus e_{lk};
(y_i \wedge y_j \wedge y_k, \overline{y}_i \overline{y}_j\overline{y}_k)]
=(y_i \wedge y_j \wedge y_l, \overline{y}_i \overline{y}_j\overline{y}_l)\\
\intertext{for $g \ge 4$ and also have}
&[e_{ik}^{-1} \oplus e_{ki};
(y_i \wedge x_j \wedge y_j, \overline{y}_i \overline{x}_j\overline{y}_j)]
=(y_k \wedge x_j \wedge y_j, \overline{y}_k \overline{x}_j\overline{y}_j)
\end{align*}
\noindent
for $g \ge 3$. Hence $H_1 (\Igb)/Q_2 =0$ holds for $g \ge 4$. In the case where 
$g =3$, we have 
\begin{align*}
&\left[ {\scriptsize \begin{pmatrix} 
0 & 0 & 1 \\ 0 & 1 & 0 \\ 1 & 0 & 0\end{pmatrix}} \oplus 
{\scriptsize \begin{pmatrix} 
0 & 0 & 1 \\ 0 & 1 & 0 \\ 1 & 0 & 0\end{pmatrix}};
(y_1 \wedge y_2 \wedge y_3, \overline{y}_1 \overline{y}_2 \overline{y}_3)
\right]
=-2(y_1 \wedge y_2 \wedge y_3, \overline{y}_1 \overline{y}_2 \overline{y}_3),\\
&[e_{ik}^{-1} \oplus e_{ki};
(0 , \overline{y}_i \overline{y}_j)]
=(0, \overline{y}_k \overline{y}_j).
\end{align*}
\noindent
Therefore $H_1 (\mathcal{I}_{3,1})/Q_2$ is at most $\Z_2$ generated by 
$(y_1 \wedge y_2 \wedge y_3, \overline{y}_1 \overline{y}_2 \overline{y}_3)$. 
To see that $H_1 (\mathcal{I}_{3,1})/Q_2 \cong \Z_2$, which proves (1) 
and (2) for $g=3$ simultaneously, we now show that there exists a 
splitting $H_1 (\mathcal{L}_{3,1}) \to H_1 (\mathcal{I}_{3,1})/Q_2$ by 
constructing a homomorphism $H_1 (\mathcal{L}_{3,1}) \to \Z_2$ 
whose precomposition by 
$H_1 (\mathcal{I}_{3,1}) \to H_1 (\mathcal{L}_{3,1})$ is non-trivial. Indeed 
if such a homomorphism exists, $H_1 (\mathcal{I}_{3,1})/Q_2 \cong \Z_2$ 
immediately follows and the composition 
$H_1 (\mathcal{I}_{3,1})/Q_2 \to H_1 (\mathcal{L}_{3,1}) \to \Z_2 
\cong H_1 (\mathcal{I}_{3,1})/Q_2$ becomes the identity map. 

Our construction uses the extended Johnson homomorphism 
\[\rho=(\widetilde{k},\sigma):
\mathcal{M}_{3,1} \longrightarrow \frac{1}{2} \wedge^3 H \rtimes 
Sp(6,\Z)\]
first defined by Morita \cite{mo5}. 
Note that $\widetilde{k}: \mathcal{M}_{3,1} \to 
\frac{1}{2} \wedge^3 H$ is 
a crossed homomorphism which extends the original 
Johnson homomorphism $\tau: \mathcal{I}_{3,1} \to \wedge^3 H$. 
Precisely speaking, such an extension $\widetilde{k}$ is not unique but 
unique up to certain coboundaries 
(see \cite[Sections 4, 5]{mo5} 
for details). Here we use 
the formulation by Birman-Brendle-Broaddus in 
\cite[Section 2.2]{bbb} and denote their crossed homomorphism 
by $\widetilde{k}: \mathcal{M}_{3,1} \to \frac{1}{2} \wedge^3 H$ again. 

Consider the composition
\[\psi: \mathcal{L}_{3,1} \xrightarrow{\widetilde{k}|_{\Lgb}} 
\frac{1}{2} \wedge^3 H \xrightarrow{\mathrm{proj}} 
\frac{1}{2} \wedge^3 L \cong 
\frac{1}{2} \wedge^3 \Z^3 \cong \frac{1}{2} \Z 
\longrightarrow \left(\frac{1}{2} \Z \right) / (2 \Z),\]
where the second map is induced from the projection 
$H \twoheadrightarrow L$ (in other word, this map assigns 
the coefficient of $y_1 \wedge y_2 \wedge y_3$ under 
our basis of $H$). 
We claim the following: 
\begin{itemize}
\item[(i)] \ $\Im \psi \subset \Z/2\Z = \Z_2$

\item[(ii)] \ $\psi: \mathcal{L}_{3,1} \to \Z_2$ is a homomorphism

\item[(iii)] \ the composition $\mathcal{I}_{3,1} 
\to \mathcal{L}_{3,1} \xrightarrow{\psi} \Z_2$ is non-trivial
\end{itemize}
\noindent
To show (i), we recall that $\mathcal{L}_{3,1} 
= \mathcal{I}_{3,1} \mathcal{H}_{3,1}$, where 
$\mathcal{H}_{3,1}$ is the preimage of the handlebody 
mapping class group $\mathcal{H}_3$ of genus $3$ by 
the natural homomorphism 
$\mathcal{M}_{3,1} \to \mathcal{M}_3$. 
Birman-Brendle-Broaddus showed 
in \cite[Section 2.2]{bbb} that 
$\widetilde{k}(h)$ does not have the term 
$n \, y_1 \wedge y_2 \wedge y_3$ with $n \in \frac{1}{2} 
\mathbb{Z}-\{0\}$ 
for any $h \in \mathcal{H}_{3,1}$. Since 
\[\widetilde{k}(f)=\widetilde{k}(ih) = 
\widetilde{k}(i)+{}^{\sigma (i)}\widetilde{k}(h)=
\widetilde{k}(i)+\widetilde{k}(h)\]
for any element $f=ih \in \mathcal{L}_{3,1}$ with 
$i \in \mathcal{I}_{3,1}$ and $h \in \mathcal{L}_{3,1}$, 
and $\widetilde{k}(i)=\tau(i) \in \wedge^3 H$, 
we see that $\psi (f)=\psi (i)+\psi (h) = \psi (i) 
\in \Z/2\Z$, which proves (i). 
Next, (ii) follows from the facts that $\mathcal{L}_{3,1}$ acts on 
$H$ with keeping $L$ and acts on $L$ through 
$ul \circ \sigma |_{\Lg}:\mathcal{L}_{3,1} \to GL(3,\Z)$ and that 
$GL(3,\Z)$ acts on $\wedge^3 L \cong \Z$ through 
$\det :GL(3,\Z) \to \{1,-1\}$. Finally, 
(iii) clearly follows from the 
construction and we finish the proof.
\end{proof}
\begin{remark}\label{rem:H2M3_2}
The above computation of $H_1 (\mathcal{I}_{3,1})_{\ur{6}}$ and 
the equality
\[\left[{\scriptsize 
\begin{pmatrix} 
1 & 0 & 0 & 0 & 0 & 0\\
0 & 1 & 0 & 0 & 0 & 0\\
0 & 0 & 1 & 0 & 0 & 0\\
0 & 0 & 0 & 1 & 0 & 0\\
0 & 0 & 0 & 0 & 1 & 0\\
0 & 0 & 1 & 0 & 0 & 1\\
\end{pmatrix}}; (y_1 \wedge y_2 \wedge x_3, 
\overline{y}_1 \overline{y}_2 \overline{x}_3)
\right]=
(y_1 \wedge y_2 \wedge y_3, 
\overline{y}_1 \overline{y}_2 \overline{y}_3
+\overline{y}_1 \overline{y}_2)\]
show that $H_1 (\mathcal{I}_{3,1})_{Sp(6,\Z)}=0$ 
(see also Putman \cite[Lemma 6.4]{pu}). 
Then by the five term exact sequence associated with (\ref{seq:mg}), 
the map $H_2 (\mathcal{M}_{3,1}) \to H_2 (Sp(6,\Z))$ is onto. 
Therefore, by using the results of Korkmaz-Stipsicz and Stein 
mentioned in Section \ref{sec:H_ur}, 
we can obtain another proof of 
$H_2 (\mathcal{M}_{3,1}) \cong \Z \oplus \Z_2$. 
\end{remark}
\begin{remark}
We have seen in Section \ref{sec:H_ur} that 
$\displaystyle\lim_{g \to \infty} H_2(urSp(2g)) \cong \displaystyle\lim_{g \to \infty} H_2(GL(g,\Z))$. 
The stable homology 
$\displaystyle\lim_{g \to \infty} H_2(GL(g,\Z)) \cong \Z_2$ also relates 
to the second homology of the automorphism group of a free group as 
shown by Gersten \cite{gersten}. 
\end{remark}

\section{Results for Lagrangian mapping class groups of 
closed surfaces}\label{sec:closed}

We now consider the Lagrangian mapping class groups $\Lg$ and $\ILg$ for 
closed surfaces. The relationship of $\Lgb$ and $\Lg$ is given by 
the exact sequence
\[1 \longrightarrow \pi_1 (T_1 \Sg) \longrightarrow 
\Lgb \longrightarrow \Lg \longrightarrow 1,\]
where $T_1 \Sg$ is the unit tangent bundle of $\Sg$ (see \cite{bi}), and 
the relationship of $\ILgb$ and $\ILg$ 
is obtained by replacing $\Lgb$ and $\Lg$ 
with $\ILgb$ and $\ILg$. As a subgroup of $\Lgb$ and $\ILgb$, 
the group 
$\pi_1 (T_1 \Sg) \subset \Igb$ is generated by the Dehn twist along the 
boundary curve of $\Sgb$ and spin-maps 
(see Birman's book \cite[Theorem 4.3]{bi} and 
Johnson \cite[Section 3]{jo1} for example). 
\begin{thm}\label{thm:H1ILg}
$H_1 (\ILg) \cong \left\{\begin{array}{ll}
\wedge^3 L^\ast \oplus \wedge^2 (L^\ast \otimes \Z_2) \oplus S^2 L & 
(g = 3), \\
\wedge^3 L^\ast \oplus S^2 L & 
(g \ge 4).\end{array}\right.$
\end{thm}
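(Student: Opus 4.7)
The plan is to derive Theorem \ref{thm:H1ILg} from Theorem \ref{thm:H1ILgb} by means of the Birman exact sequence
\[
1 \longrightarrow \pi_1(T_1 \Sg) \longrightarrow \ILgb \longrightarrow \ILg \longrightarrow 1
\]
displayed at the start of this section. Its Lyndon-Hochschild-Serre five-term exact sequence in group homology yields
\[
H_1(\pi_1(T_1 \Sg))_{\ILg} \longrightarrow H_1(\ILgb) \longrightarrow H_1(\ILg) \longrightarrow 0,
\]
so in view of Theorem \ref{thm:H1ILgb} the whole task reduces to showing that the image of the first map coincides with the $L^\ast$ direct summand of $H_1(\ILgb)$.

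To identify the image I would use the fact, recalled in the preamble of this section, that $\pi_1(T_1\Sg) \subset \Igb$ is generated by the boundary Dehn twist $T_{\partial \Sgb}$ and the spin-maps $\eta_\gamma$ for simple closed curves $\gamma \subset \Sg$. Via Johnson's isomorphism $H_1(\Igb) \cong \wedge^3 H \times_{\wedge^3(H \otimes \Z_2)} B^3$ and the identification of the coinvariants carried out in Lemma \ref{lem:Ig_Coinv}, I would compute the image of each generator. The boundary twist has vanishing Johnson component ($\tau(T_{\partial \Sgb}) = 0$) and an Arf-type Birman-Craggs-Johnson component, while for a spin-map $\eta_\gamma$ with $[\gamma] = a \in H$ the classical formula reads $\tau(\eta_\gamma) = a \wedge \omega$, where $\omega = \sum_{i=1}^g x_i \wedge y_i$, together with an analogous quadratic formula for $\beta$. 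These classes project trivially to the $S^2 L$ summand (since $\pi_1(T_1\Sg) \subset \Igb$), to the $\wedge^3 L^\ast$ summand (no $\wedge^3 L_y$ term appears in $a \wedge \omega$), and, when $g = 3$, to the $\wedge^2(L^\ast \otimes \Z_2)$ summand (no $\overline{y}_j \overline{y}_k$ term appears in $\beta$). Taking $a = y_k$ for $k = 1, \ldots, g$, the spin-maps $\eta_{y_k}$ are seen to land in the $L^\ast$ summand spanned by the generators $(y_k \wedge x_i \wedge y_i, \overline{y}_k \overline{x}_i \overline{y}_i)$ identified in Lemma \ref{lem:Ig_Coinv}.

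The main obstacle will be the surjectivity onto the full $L^\ast$ summand, which is delicate: a naive reading of $\tau(\eta_{y_k}) = \sum_i y_k \wedge x_i \wedge y_i$ through Lemma \ref{lem:Ig_Coinv} yields $g \cdot y_k$ rather than $y_k$, which would leave an unwanted $L^\ast/gL^\ast$ summand in $H_1(\ILg)$. I expect this to be resolved by using a finer set of generators of $\pi_1(T_1\Sg)$ realising the single terms $y_k \wedge x_i \wedge y_i$ separately (for instance spin-maps whose loop is supported in a single handle), and/or by exploiting the $\ILg$-conjugation action when passing to the coinvariants $H_1(\pi_1(T_1\Sg))_{\ILg}$. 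Once the image is identified with the full $L^\ast$ summand, Theorem \ref{thm:H1ILg} follows immediately from the five-term exact sequence above.
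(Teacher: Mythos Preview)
Your approach is the paper's: pass to the five-term sequence of the Birman extension and identify the image of $H_1(\pi_1(T_1\Sg))$ inside $H_1(\ILgb)$. The paper likewise notes that $\pi_1(T_1\Sg)\subset\Igb$ kills the $S^2L$ component, and for $g=3$ it checks by hand that the generators project trivially to $\wedge^2(L^\ast\otimes\Z_2)$, just as you propose.

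The obstacle you isolate --- surjectivity onto the $L^\ast$ summand --- is exactly the step the paper does \emph{not} argue but instead cites from Levine \cite[Section~3.4]{le1}. Your two suggested repairs do not close this gap. Restricting the support of a spin curve to a single handle cannot produce an individual term $y_k\wedge x_i\wedge y_i$: the restriction of $\tau$ to $\pi_1(T_1\Sg)$ kills the boundary twist and factors through $H$, and by $\Mgb$-equivariance it is forced to be $a\mapsto a\wedge\omega$ with the \emph{full} $\omega=\sum_i x_i\wedge y_i$, independently of where the curve lives. (Incidentally, $y_k\wedge x_k\wedge y_k=0$, so your naive count is $(g-1)\cdot y_k$ rather than $g\cdot y_k$; the worry remains either way.) And passing to $\ILg$-coinvariants on the source cannot enlarge the image in the target $H_1(\ILgb)$. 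So to finish you really do need Levine's computation that the projection to $L^\ast$ is onto when restricted to $\pi_1(T_1\Sg)$; the paper simply quotes this and does not reproduce it.
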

\begin{proof}
We have an exact sequence
\[H_1 (\pi_1 (T_1 \Sg)) \longrightarrow 
H_1 (\ILgb) \longrightarrow H_1 (\ILg) \longrightarrow 0.\]
In \cite[Section 3.4]{le1}, Levine showed that $\pi_1 (T_1 \Sg)$ 
projects trivially on $\wedge^3 L^\ast$ and onto on $L^\ast$ 
with respect to 
the abelianization 
\[H_1 (\ILgb) \cong \left\{\begin{array}{ll}
\wedge^3 L^\ast \oplus L^\ast \oplus 
\wedge^2 (L^\ast \otimes \Z_2) \oplus S^2 L & 
(g = 3), \\
\wedge^3 L^\ast \oplus L^\ast \oplus S^2 L & 
(g \ge 4).\end{array}\right. \]
Since $\pi_1 (T_1 \Sg)$ is included in $\Igb$, 
it projects trivially on $S^2 L$. 
Hence the theorem for $g \ge 4$ holds. In the case where $g=3$, 
we can directly check that all of generators of $\pi_1 (T_1 \Sigma_3)$ 
are sent to $0 \in \wedge^2 (L^\ast \otimes \Z_2)$, which completes the 
proof for $g=3$. 
\end{proof}

\begin{thm}\label{thm:H1Lg}
\begin{tabular}[t]{ll}
$(1)$ & $H_1 (\Lg) \cong H_1 (\Lgb) \cong 
\left\{\begin{array}{ll}
\Z_2 \oplus \Z_2  & (g = 3), \\
\Z_2 & (g \ge 4).
\end{array}\right.$\\
$(2)$ & The map $(\sigma|_{\Lg})_\ast: 
H_2 (\Lg) \to H_2(\ur{2g})$ is surjective for $g \ge 3$. 
\end{tabular}
\end{thm}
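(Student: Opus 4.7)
The plan is to leverage the surjection $\Lgb \twoheadrightarrow \Lg$ with kernel $\pi_1(T_1 \Sg) \subset \Igb$, recalled just before the theorem, together with the complete description of $H_1(\Lgb)$ and the surjectivity statement of Theorem \ref{thm:H1Lgb}.

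For part (1), the five-term exact sequence gives
\[H_1(\pi_1(T_1 \Sg)) \to H_1(\Lgb) \to H_1(\Lg) \to 0,\]
so the claim reduces to showing that $\pi_1(T_1 \Sg)$ maps trivially into $H_1(\Lgb)$. The $\Z_2$ summand of $H_1(\Lgb)$ pulled back from $H_1(\ur{2g})$ kills $\pi_1(T_1 \Sg)$ at once, since $\pi_1(T_1 \Sg) \subset \Igb \subset \Ker \sigma$. For $g=3$ the extra $\Z_2$ summand is detected by the homomorphism $\psi: \mathcal{L}_{3,1} \to \Z_2$ constructed in the proof of Theorem \ref{thm:H1Lgb}(1) from the extended Johnson homomorphism $\widetilde{k}$. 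On elements of $\Igb$ the crossed homomorphism $\widetilde{k}$ agrees with Johnson's $\tau$, so $\psi$ restricted to $\pi_1(T_1 \Sg)$ equals the mod $2$ reduction of the coefficient of $y_1 \wedge y_2 \wedge y_3$ in $\tau$; this is precisely the composite with the projection onto the $\wedge^3 L_y \cong \wedge^3 L^\ast$ summand appearing in the proof of Theorem \ref{thm:H1ILg}. Levine's result quoted there states that $\pi_1(T_1 \Sg)$ projects trivially on $\wedge^3 L^\ast$, so $\psi$ vanishes on $\pi_1(T_1 \Sg)$.

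For part (2), I would invoke naturality. The composition
\[\Lgb \twoheadrightarrow \Lg \xrightarrow{\sigma|_{\Lg}} \ur{2g}\]
equals $\sigma|_{\Lgb}$, so on $H_2$ the map $(\sigma|_{\Lgb})_\ast$ factors through $H_2(\Lg)$. Since the composite is surjective by Theorem \ref{thm:H1Lgb}(2), so is $(\sigma|_{\Lg})_\ast$.

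The only nonroutine point is part (1) in the $g=3$ case: the bookkeeping that identifies $\psi|_{\pi_1(T_1 \Sg)}$ with the $\wedge^3 L^\ast$-projection of Levine's abelianization of $\ILgb$, so that his triviality statement can be quoted. Once this identification is secured, nothing further has to be computed.
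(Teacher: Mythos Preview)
Your argument is correct and follows essentially the same route as the paper. For $(1)$ with $g\ge 4$ and for $(2)$, both you and the paper use the factorization $\Lgb \twoheadrightarrow \Lg \to \ur{2g}$; your phrasing of $(2)$ via naturality of the composite on $H_2$ is in fact a bit cleaner than the paper's ``same as Theorem~\ref{thm:H1Lgb}'', which literally read would mean rerunning the five-term argument for the closed extension $1\to\Ig\to\Lg\to\ur{2g}\to 1$. The only substantive difference is the $g=3$ case of $(1)$: the paper checks $\psi|_{\pi_1(T_1\Sigma_3)}=0$ by direct evaluation of the extended Johnson homomorphism on the spin-map generators, whereas you observe that on $\Igb$ one has $\widetilde{k}=\tau$, so $\psi|_{\pi_1(T_1\Sigma_3)}$ is the mod~$2$ reduction of the $\wedge^3 L^\ast$-component of $\tau$, which Levine's result (already invoked in the proof of Theorem~\ref{thm:H1ILg}) says vanishes integrally. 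This is the same computation, just quoted rather than redone; your packaging has the advantage of making the $g=3$ case no harder than the rest.
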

\begin{proof}
Since $\sigma |_{\Lgb}: \Lgb \to \ur{2g}$ 
factors through $\Lg$, (1) for $g \ge 4$ 
immediately holds. (1) for $g=3$ also holds by explicit computations of 
the extended Johnson homomorphism for generators of $\pi_1 (T_1 \Sigma_3)$. 
The proof of (2) is the same as that of Theorem \ref{thm:H1Lgb}. 
\end{proof}

\section{Remarks on higher (co)homology of $\Lg$ and $\ILg$}\label{sec:remark}
\subsection{Relationship to the homology of the pure braid group}\label{subsec:pure}

In \cite{le1}, Levine studied various embeddings of 
the pure braid group $P_n$ of 
$n$ strands into $\Mgb$ and $\Mg$, where 
$n=g, 2g$ etc. We now use one of them defined as follows. 
Let $D_g$ be a disk with $g$ holes. We take an 
embedding $\iota : D_g \hookrightarrow \Sgb$ as in Figure \ref{fig:embedding}, 
where we consider the surface $\Sgb$ to be a disk with $g$ handles attached and 
the belt circles of the handles correspond to the loops $x_1,x_2,\ldots,x_g$  
in Figure \ref{fig:surface} after filling the boundary $\partial \Sgb$ by a disk. 
The mapping class group of 
$D_g$, where the self-diffeomorphisms of $D_g$ are supposed to 
fix the boundary pointwise, is known to be isomorphic to 
the framed pure braid group 
of $g$ strands. Here the framing counts how many times 
one gives Dehn twists along each of the loops parallel to 
the inner boundary. 
For any choice of framings, we have a homomorphism 
from the pure braid group $P_g$ of $g$ strands to $\Mgb$ by extending each 
mapping class by identity on the outside of $\iota (D_g)$. 
We can easily check that 
the image of this map is contained in $\ILgb$. 
Therefore we obtain a homomorphism $\Phi: P_g \to \ILgb$. 
Similarly, we have a homomorphism from $P_g$ to $\ILg$ 
also denoted by $\Phi: P_g \to \ILg$. 

\begin{figure}[htbp]
\begin{center}
\includegraphics[width=0.6\textwidth]{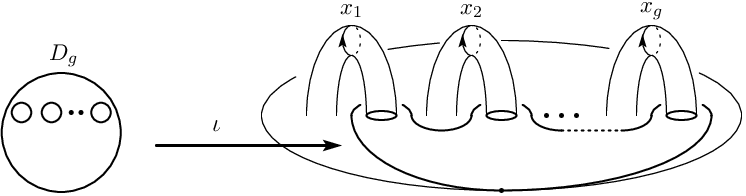}
\end{center}
\caption{The embedding $\iota :D_g \hookrightarrow \Sgb$}
\label{fig:embedding}
\end{figure}

\begin{thm}\label{thm:pure}
The induced map $\Phi_\ast :H_\ast (P_{g}) \to H_\ast (\ILg)$ is injective. 
\end{thm}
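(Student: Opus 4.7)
The plan is to establish surjectivity of $\Phi^\ast:H^\ast(\ILg;\Q)\to H^\ast(P_g;\Q)$ in every degree, which by duality over $\Q$ yields rational injectivity of $\Phi_\ast$, and then to upgrade to $\Z$-coefficients using the torsion-freeness of $H_\ast(P_g;\Z)$ given by Arnold's theorem. The natural bridge is the composition
\[ \widetilde{\sigma}:=\sigma|_{\ILg}\circ\Phi:P_g\longrightarrow S^2 L. \]

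First I would compute $\widetilde{\sigma}$ on the standard Arnold generators $A_{ij}$ ($1\le i<j\le g$) of $P_g$. Under the embedding of Figure \ref{fig:embedding}, each $A_{ij}$ maps, up to a framing-dependent product of Dehn twists along the boundary circles of the holes, to the Dehn twist $T_{c_{ij}}$ along a simple closed curve enclosing the $i$-th and $j$-th holes. The formulas $\sigma|_{\ILg}(T_{c_{ij}})=X_i^2-X_{ij}+X_j^2$ and $\sigma|_{\ILg}(T_{c_k})=X_k^2$ recalled in the proof of Lemma \ref{lem:H2ILgb} yield
\[ \widetilde{\sigma}_\ast(A_{ij})=-X_{ij}+(\mbox{a }\Z\mbox{-linear combination of }X_k^2\mbox{'s}). \]
The coefficient $-1$ of $X_{ij}$ appears only in the image of $A_{ij}$ itself, so the $\binom{g}{2}$ images are $\Q$-linearly independent in $S^2 L\otimes\Q$, regardless of the choice of framing used to define $\Phi$.

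Dualizing this degree-one computation, $\widetilde{\sigma}^\ast:H^1(S^2 L;\Q)\to H^1(P_g;\Q)$ is surjective. Arnold's theorem asserts that $H^\ast(P_g;\Z)$ is torsion-free and generated as a graded-commutative ring by the degree-one classes $\omega_{ij}$ dual to $A_{ij}$, subject to the Arnold relations. Since $\widetilde{\sigma}^\ast:H^\ast(S^2 L;\Q)\to H^\ast(P_g;\Q)$ is a ring homomorphism whose image in degree one spans $H^1(P_g;\Q)$, its image in every degree contains a set of ring generators of the target, hence equals the target. A fortiori $\Phi^\ast$ is surjective on rational cohomology in every degree. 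By universal coefficients over $\Q$ this is equivalent to $\Phi_\ast:H_\ast(P_g;\Q)\to H_\ast(\ILg;\Q)$ being injective, and the torsion-freeness of $H_\ast(P_g;\Z)$ then upgrades this to integral injectivity.

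The main obstacle is verifying that the framing contribution to $\widetilde{\sigma}_\ast(A_{ij})$ cannot pollute the cross-term $-X_{ij}$; the explicit formula $\sigma|_{\ILg}(T_{c_{ij}})=X_i^2-X_{ij}+X_j^2$ is precisely what rules this out, since the framing corrections only contribute $\{X_k^2\}$-terms. The remaining ingredients (Arnold's computation of $H^\ast(P_g;\Z)$ and the universal coefficient theorem) are classical.
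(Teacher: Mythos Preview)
Your argument is correct and follows essentially the same route as the paper's proof: both factor through the composite $\widetilde{\sigma}=\sigma|_{\ILg}\circ\Phi:P_g\to S^2 L$, verify that the induced map on $H^1$ is surjective, invoke Arnol'd's theorem that $H^\ast(P_g)$ is torsion-free and generated in degree~$1$ as a ring to get surjectivity on all of cohomology, and then dualize to homology using the freeness of $H_\ast(P_g)$. The only cosmetic differences are that you carry out the degree-one computation explicitly via the formula $\sigma|_{\ILg}(T_{c_{ij}})=X_i^2-X_{ij}+X_j^2$ (the paper simply says this is ``clear from a presentation of $P_g$''), and that you pass through $\Q$-coefficients before upgrading to $\Z$ while the paper argues integrally throughout; neither changes the substance.
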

\begin{proof}
Consider the induced map $H^\ast (S^2 L) \to H^\ast (P_g)$ of 
the composition $P_g \xrightarrow{\Phi} \ILg \to S^2 L$ on cohomology. 
Here the ring structure of $H^\ast (P_g)$ was 
completely determined by Arnol'd in \cite{arnold}, 
and in particular, it was shown that 
$H^\ast (P_g)$ is a finitely generated free abelian group and 
is generated by degree $1$ elements as a ring. 
The former shows that $H_\ast (P_g)$ is also finitely generated free abelian 
and the latter shows that $H^\ast (S^2 L) \to H^\ast (P_g)$ is onto since 
it is clear from a presentation of $P_g$ (see \cite{bi} for example) 
that $H^1 (S^2 L) \to H^1 (P_g)$ is onto. By passing to homology, 
we see that $H_\ast (P_g) \to H_\ast (S^2 L)$ is injective. 
The theorem follows from this. 
\end{proof}

\subsection{Vanishing of odd Miller-Morita-Mumford classes on $\Lg$}
\label{subsec:oddMMM}

Finally, we discuss the rational cohomology of higher degrees 
of $\Lg$ with relationships to characteristic classes of oriented 
$\Sg$-bundles called {\it Miller-Morita-Mumford classes}. 

Here we recall the definition of Miller-Morita-Mumford classes 
following Morita \cite{mo1}. 
Let $\pi: E \to B$ be an oriented $\Sg$-bundle over a closed oriented 
manifold $B$. Since $\Sg$ is $2$-dimensional, the relative tangent bundle 
$\Ker \pi_\ast$ is a vector bundle over $E$ of rank $2$. In particular, 
we can take its Euler class $e \in H^2 (E)$. Then 
$i$-th Miller-Morita-Mumford class $e_i$ is defined by 
\[e_i := \pi_! (e^{i+1}) \in H^{2i} (B),\]
where $\pi_! : H^\ast (E) \to H^{\ast -2} (B)$ is the Gysin map. 
This construction is natural with respect to bundle maps, 
so that we can regard $e_i$ as 
a cohomology class in the classifying space. 
Namely $e_i \in H^{2i}(B\mathrm{Diff}_+ \Sg)$, 
where $B\mathrm{Diff}_+ \Sg$ is the classifying space of 
the topological group $\mathrm{Diff}_+ \Sg$ of 
orientation preserving self-diffeomorphisms of $\Sg$ 
with $C^\infty$-topology. By a theorem of Earle-Eells \cite{ee}, 
we have $B\mathrm{Diff}_+ \Sg = K(\Mg,1)$. Therefore 
\[e_i \in H^{2i}(B\mathrm{Diff}_+ \Sg) = H^{2i} (K(\Mg,1)) 
=H^{2i} (\Mg).\]

Now we ask whether $e_{i} \in H^{2i} (\Mg;\Q)$, 
regarded as a rational cohomology class, survives 
in $H^{2i} (\Ig;\Q)$ by the pull-back of $\Ig \hookrightarrow \Mg$. 
A partial answer to this question is given as follows (see Morita \cite{mo1}). 
It is known that every {\it odd} class $e_{2i-1} \in H^{4i-2} (\Mg;\Q)$ 
can be obtained as the pull-back of some class in 
$H^{4i-2} (\Symp{Z};\Q)$, which implies that  
all the odd classes $e_{2i-1}$ vanish in $H^{4i-2} (\Ig;\Q)$. However, 
this argument says nothing about {\it even} classes $e_{2i}$ and 
it has been a long standing problem to determine whether 
even classes $e_{2i}$ vanish or not in $H^{4i} (\Ig ;\Q)$. 

The author's motivation for the study in this paper 
is to attack this problem by 
considering groups locating between $\Mg$ and $\Ig$ and investigating 
the behavior of $e_i$ on them. 
As examples of such a kind of groups, finite index subgroups including 
level $L$ mapping class groups defined as the kernel of the 
composition 
\[\Mg \longrightarrow \Symp{Z} \longrightarrow 
Sp(2g,\Z/L\Z)\]
are often studied. However, 
we cannot solve the above problem by using them 
since for any finite index subgroup $G$ of $\Mg$
there exists a transfer map
\[\mathrm{tr}:H^\ast (G;\Q) \longrightarrow H^\ast (\Mg;\Q)\]
such that $\mathrm{tr} \circ i^\ast: H^\ast (\Mg;\Q) \to H^\ast (\Mg;\Q)$ 
is the multiplication by a positive integer $[\Mg:G]$, 
where $i:G \hookrightarrow \Mg$ denotes the inclusion. 
In particular, we see that the pull-back map on 
the rational cohomology is always injective for any finite index subgroup. 
Therefore we shall need infinite index subgroups and 
we focus on $\Lg$ and $\ILg$ in this paper. 
At present, we cannot give the final answer even for $\Lg$, but 
we now present an observation for odd classes, by which we finish 
this paper. 

\begin{lem}\label{lem:ur_GL}
If $g$ is sufficiently larger than $q$, 
we have
\[H^q (\ur{2g};\Q) \cong H^q (GL(g,\Z);\Q).\]
\end{lem}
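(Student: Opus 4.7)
The plan is to analyze the Lyndon--Hochschild--Serre spectral sequence of the split extension (\ref{seq:S2ur}) with rational coefficients:
\[E_2^{p,r} = H^p(GL(g,\Z);\, H^r(S^2 L;\Q)) \Longrightarrow H^{p+r}(\ur{2g};\Q).\]
Since $S^2 L$ is a finitely generated free abelian group, $H^r(S^2 L;\Q) \cong \wedge^r((S^2 L)^\ast \otimes \Q)$, on which $GL(g,\Z)$ acts through its natural action on $L$. The projection $\ur{2g} \twoheadrightarrow GL(g,\Z)$ induces the edge map
\[H^q(GL(g,\Z);\Q) = E_2^{q,0} \longrightarrow H^q(\ur{2g};\Q),\]
and the splitting in (\ref{seq:S2ur}) makes this edge map split injective. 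It therefore suffices to prove that $E_2^{p,r} = 0$ for $r \ge 1$ whenever $p + r \le q$, once $g$ is sufficiently large compared to $q$.

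For that vanishing, observe that $(S^2 L)^\ast \otimes \Q \cong S^2(L^\ast \otimes \Q)$ is a polynomial $GL(g,\Q)$-representation in which every weight has $\det$-character $-2$; hence every weight of $\wedge^r((S^2 L)^\ast \otimes \Q)$ has $\det$-character $-2r$, which is nonzero for $r \ge 1$. In particular this representation has no nonzero $GL(g,\Q)$-invariant vector, and for $g$ large compared to $r$ no $\det^{k}$-isotypic summand can appear either, so it carries no nonzero $SL(g,\Q)$-invariants. Now invoke Borel's theorem on the stable rational cohomology of arithmetic groups with twisted coefficients: for $g$ sufficiently large compared to $p$, there is a natural isomorphism
\[H^p(GL(g,\Z);\,V) \;\cong\; H^p(GL(g,\Z);\Q) \otimes_\Q V^{GL(g,\Q)}\]
for every finite-dimensional rational $GL(g,\Q)$-representation $V$. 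Applied to $V = \wedge^r((S^2 L)^\ast \otimes \Q)$ with $r \ge 1$, this yields the required vanishing $E_2^{p,r} = 0$, so $E_\infty^{q,0} = E_2^{q,0}$ and the edge map becomes an isomorphism.

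The main obstacle is to invoke and correctly apply Borel's theorem on the twisted stable cohomology of $GL(g,\Z)$ together with the corresponding bound on the stable range; the weight calculation for $\wedge^r((S^2 L)^\ast \otimes \Q)$ is elementary, and the spectral-sequence bookkeeping is then routine. A more self-contained alternative would be to extend the explicit calculations of Lemmas \ref{lem:SLS2L} and \ref{lem:SLWS2L} to higher cohomological degrees, proving directly the stable vanishing $H^p(SL(g,\Z); \wedge^r((S^2 L)^\ast \otimes \Q)) = 0$ for $r \ge 1$, and then comparing $\ur{2g}$ with $\urp{2g}$ via the extension (\ref{seq:urSp+}) as in the proof of Corollary \ref{cor:urSp}.
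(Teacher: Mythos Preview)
Your proof is correct and follows essentially the same route as the paper's: both analyze the Lyndon--Hochschild--Serre spectral sequence of (\ref{seq:S2ur}), identify $H^r(S^2L;\Q)\cong\wedge^r(S^2(L^\ast\otimes\Q))$, note that its $GL(g,\Z)$-invariants vanish for $r\ge 1$, and then invoke Borel's vanishing theorem \cite{borel} to obtain $E_2^{p,r}=0$ for $r\ge 1$ in the stable range. Your explicit use of the splitting of (\ref{seq:S2ur}) to make the edge map split injective is a small extra detail, but the overall argument is the same.
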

\begin{proof}
The $E_2$-term of the Lyndon-Hochschild-Serre spectral sequence for 
the group extension (\ref{seq:S2ur}) is given by 
\[E_2^{p,q} = H^p (GL(g,\Z);H^q (S^2 L;\Q)).\]
Our claim immediately follows once 
we show that $H^p (GL(g,\Z);H^q (S^2 L;\Q))=0$ if $q \ge 1$. 
Since 
$H^q(S^2 L;\Q) \cong \wedge^q (S^2 (L^\ast \otimes \Q))$ and 
it is easy to show that the invariant part 
$\wedge^q (S^2 (L^\ast \otimes \Q))^{GL(g,\Z)}$ is trivial, 
we can use Borel's vanishing theorem \cite{borel} to show that 
\[H^p (GL(g,\Z);H^q (S^2 L;\Q))=0\]
for any $q \ge 1$.
\end{proof}

\begin{thm}
For every $i$, the $(2i-1)$-st 
Miller-Morita-Mumford class $e_{2i-1}$ vanishes in 
$H^{4i-2} (\Lg;\Q)$ if $g$ is 
sufficiently larger than $i$.
\end{thm}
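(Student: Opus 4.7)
The plan is to combine Morita's factorization of odd Miller-Morita-Mumford classes through the symplectic representation with Lemma \ref{lem:ur_GL}, and then reduce the vanishing to a degree argument in the rational cohomology of $BO(g)$. By a theorem of Morita \cite{mo1}, the class $e_{2i-1}\in H^{4i-2}(\Mg;\Q)$ is the pullback $\sigma^{\ast}(\tilde{e}_{2i-1})$ of some class $\tilde{e}_{2i-1} \in H^{4i-2}(\Symp{Z};\Q)$; moreover $\tilde{e}_{2i-1}$ lies in the image of the natural map
\[
H^{4i-2}(B\Symp{R};\Q) \longrightarrow H^{4i-2}(\Symp{Z};\Q)
\]
from the classifying space of the real symplectic group (with its Lie group topology) to that of its discrete arithmetic subgroup.

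Since $\sigma|_{\Lg}$ factors as $\Lg \to \ur{2g} \hookrightarrow \Symp{Z}$ by the very definition of $\ur{2g}$, the class $e_{2i-1}|_{\Lg}$ is the pullback of $\tilde{e}_{2i-1}|_{\ur{2g}} \in H^{4i-2}(\ur{2g};\Q)$. By Lemma \ref{lem:ur_GL}, for $g$ sufficiently larger than $i$, this cohomology group is isomorphic to $H^{4i-2}(GL(g,\Z);\Q)$, the isomorphism being realized by the splitting
\[
s\colon GL(g,\Z) \hookrightarrow \ur{2g}, \qquad A \longmapsto \begin{pmatrix} A & O \\ O & {}^tA^{-1} \end{pmatrix}
\]
of the sequence (\ref{seq:S2ur}). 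It therefore suffices to show that the image of $\tilde{e}_{2i-1}$ in $H^{4i-2}(GL(g,\Z);\Q)$, via the Siegel Levi embedding $A \mapsto \mathrm{diag}(A,{}^tA^{-1})$, vanishes.

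For this I factor the Siegel Levi as $GL(g,\Z) \hookrightarrow GL(g,\mathbb{R}) \hookrightarrow \Symp{R}$. At the level of maximal compact subgroups this becomes $O(g) \hookrightarrow U(g)$, so on classifying spaces it becomes, up to homotopy equivalence, $BO(g) \hookrightarrow BU(g)$. Since $\tilde{e}_{2i-1}$ is pulled back from $B\Symp{R} \simeq BU(g)$, its image in $H^{4i-2}(GL(g,\Z);\Q)$ factors through
\[
H^{4i-2}(BGL(g,\mathbb{R});\Q) = H^{4i-2}(BO(g);\Q).
\]
The rational cohomology of $BO(g)$ is a polynomial algebra on Pontryagin classes, hence concentrated in degrees divisible by $4$; as $4i-2 \not\equiv 0 \pmod 4$, we have $H^{4i-2}(BO(g);\Q)=0$. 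Thus the image is zero, so $\tilde{e}_{2i-1}|_{\ur{2g}}=0$ and consequently $e_{2i-1}|_{\Lg}=0$.

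The main technical input I am taking as a black box is Morita's identification of $\tilde{e}_{2i-1}$ with a class pulled back from the continuous cohomology of $\Symp{R}$ (essentially a nonzero rational multiple of the Chern character of the Hodge bundle); once this is in hand the rest is the elementary observation that Pontryagin classes live only in degrees divisible by four. This also explains why the argument handles only the odd Miller-Morita-Mumford classes: for $e_{2i}$ one would instead land in $H^{4i}(BO(g);\Q)$, which does contain nontrivial polynomials in the Pontryagin classes, so no obstruction of this type forces vanishing.
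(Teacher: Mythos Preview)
Your proof is correct and follows the same overall architecture as the paper's: pull $e_{2i-1}$ back from $B\Symp{R}$ via Morita's result, restrict to $\Lg$, use Lemma~\ref{lem:ur_GL} to pass to $GL(g,\Z)$ via the Siegel--Levi splitting, and then observe that the class factors through $H^{4i-2}(BGL(g,\mathbb{R});\Q)$. The difference lies in the last step. The paper invokes Milnor's theorem \cite[Appendix]{milnor} that the comparison map $H^{\ast}(BGL(g,\mathbb{R})^{C^\infty};\Q)\to H^{\ast}(BGL(g,\mathbb{R})^{\delta};\Q)$ is zero in all positive degrees; you instead note that the source group itself vanishes in degree $4i-2$, since $BGL(g,\mathbb{R})\simeq BO(g)$ and $H^{\ast}(BO(g);\Q)$ is generated by Pontryagin classes in degrees divisible by~$4$. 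Your argument is more elementary and self-contained for this particular statement, and it makes transparent \emph{why} only the odd classes are handled. The paper's route via Milnor is a heavier black box but would in principle kill any class coming from $BGL(g,\mathbb{R})^{C^\infty}$, not just those in degrees $\equiv 2 \pmod 4$.
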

\begin{proof} 
It is known that the group cohomology $H^\ast (G)$ 
of a discrete group $G$ can be rewritten as 
$H^\ast (BG^\delta)$, where $BG$ denotes the classifying space of $G$. 
When $G$ is a Lie group, we write $G^{C^\infty}$ for $G$ with 
$C^\infty$ topology and $G^\delta$ for $G$ with discrete topology. 

Consider the following commutative diagram:

\[\SelectTips{cm}{}\xymatrix{
H^\ast (BSp(2g,\mathbb{R})^{C^\infty};\Q) \ar[rr] 
\ar[d]^{B (\mathrm{id})^\ast} 
\ar@{}[ddrr]|\circlearrowleft & & 
H^\ast (BGL(g,\mathbb{R})^{C^\infty};\Q) 
\ar[d]^{B (\mathrm{id})^\ast} \\
H^\ast (BSp(2g,\mathbb{R})^{\delta};\Q) \ar[d] & & 
H^\ast (BGL(g,\mathbb{R})^{\delta};\Q) \ar[d] \\
H^\ast (BSp (2g,\Z);\Q) \ar[d]^{B\sigma^\ast} \ar[r] 
\ar@{}[dr]|\circlearrowleft & 
H^\ast (BurSp (2g);\Q) \ar[d]^{B(\sigma|_{\Lg})^\ast} \ar[r]& 
H^\ast (BGL (g,\Z);\Q) \\
H^\ast (B\Mg);\Q) \ar[r] & H^\ast (B \Lg;\Q) & }\]
where $\mathrm{id}$ denotes the identity map, which always gives 
a continuous map 
from $G$ with discrete topology to that with $C^\infty$ topology for a 
Lie group $G$, and all the arrows not labeled 
are pull-backs by 
the induced maps of inclusions on classifying spaces. 
We now assume that $g$ is sufficiently large. 
Since $Sp(2g,\mathbb{R})^{C^\infty}$ is homotopy equivalent to 
the unitary group $U(g)^{C^\infty}$, we have 
$H^\ast (BSp(2g,\mathbb{R})^{C^\infty};\Q) \cong 
H^\ast (BU(g)^{C^\infty};\Q)$ and 
the latter is known to be isomorphic to 
the polynomial algebra $\Q [c_1, c_2, \ldots]$ generated 
by the Chern classes $c_1, c_2, \ldots$ independently 
in the stable range. This polynomial algebra $\Q [c_1, c_2, \ldots]$ is 
mapped onto $\Q [c_1, c_3, c_5, \ldots]$ 
in $H^\ast (BSp(2g,\mathbb{R})^{\delta};\Q)$, 
and onto $\Q [e_1, e_3, e_5, \ldots]$ in $H^\ast (B \Mg;\Q)$. 
We refer to \cite{mo1} again for these arguments. On the other hand, 
it was shown by Milnor \cite[Appendix]{milnor} that 
\[B (\mathrm{id})^\ast: H^\ast (BGL(g,\mathbb{R})^{C^\infty};\Q) 
\longrightarrow 
H^\ast (BGL(g,\mathbb{R})^\delta;\Q)\] 
is trivial for $\ast \ge 1$. 
By combining this fact with Lemma \ref{lem:ur_GL}, 
the theorem follows. 
\end{proof}

\begin{remark}
Recently, Giansiracusa and Tillmann \cite{gt} 
have proved a closely related result that odd 
Miller-Morita-Mumford classes vanish in the {\it integral} cohomology of 
the handlebody subgroup $\mathcal{H}_g$ for $g \ge 2$. 
In fact, they showed that 
odd Miller-Morita-Mumford classes are in the kernel of the pull-back map 
on the integral cohomology by 
$B \mathrm{Diff}_+ M \to B\mathrm{Diff}_+ \Sigma_g$ 
where $M$ is any compact oriented 3-manifold $M$ with $\partial M = \Sigma_g$. 
\end{remark}

\section{Acknowledgement}\label{sec:acknowledge}
The author would like to express his gratitude 
to Professor Sadayoshi Kojima and Professor Shigeyuki Morita 
for their encouragement and to Yusuke Kuno and Andrew Putman 
for useful discussions and comments. 
The author also would like to thank the referee for 
his-or-her helpful comments to improve the paper. 
This research was supported by 
JSPS Research Fellowships for Young Scientists and 
KAKENHI (No.~08J02356 and No.~21740044).

\vspace{25pt}

\end{document}